\newtheorem{Theorem}{Theorem}[section]
\newtheorem{Proposition}{Proposition}[section]
\newtheorem{Remark}{Remark}[section]
\newtheorem{Lemma}{Lemma}[section]
\newtheorem{Definition}{Definition}[section]
\newtheorem{Example}{Example}[section]
\def\Limsup{\mathop{{\rm Lim}\,{\rm sup}}}
\renewcommand\@biblabel[1]{#1.}
\begin{document}

\title{\bf On Second-order Conditions for  Quasiconvexity and Pseudoconvexity  of $\mathcal{C}^{1,1}$-smooth Functions}
\author{Pham  Duy Khanh\footnote{Department of Mathematics, HCMC University of Education, Ho Chi Minh, Vietnam and Center for Mathematical
		Modeling, Universidad de Chile, Santiago, Chile. E-mails:
		pdkhanh182@gmail.com; pdkhanh@dim.uchile.cl},\and Vo Thanh Phat\thanks{Department of Mathematics, HCMC University of Pedagogy,  Ho Chi Minh, Vietnam. E-mail:
		vtphat1996@gmail.com}}
\maketitle

\medskip
\begin{quote}
\noindent {\bf Abstract}  For a $\mathcal{C}^2$-smooth function on a finite-dimensional space, a necessary condition for its quasiconvexity is the positive semidefiniteness of its Hessian matrix on the subspace orthogonal to its gradient, whereas a sufficient condition for  its strict pseudoconvexity  is the positive definiteness of its Hessian matrix on the subspace orthogonal to its gradient. Our aim in this paper is to extend those conditions for  $\mathcal{C}^{1,1}$-smooth functions by using the  Fr\'echet and Mordukhovich second-order subdifferentials.

\medskip
\noindent {\bf Keywords} Second-order subdifferential, Mean value theorem, $\mathcal{C}^{1,1}$-smooth function, Quasiconvexity, Pseudoconvexity.

\medskip
\noindent {\bf Mathematics Subject Classification (2010)}\ 26A51, 26B25, 26E25, 49J52.
\end{quote}

\section{Introduction}
Since the notion of convexity does no longer suffice to many mathematical models used in
decision sciences, economics, management sciences, stochastics, applied mathematics and engineering, various generalizations of convex functions have been introduced in literature such as (strictly) quasiconvex and (strictly) pseudoconvex functions. Those functions  preserve one or more properties of convex functions and give rise to models which are more adaptable to real-world situations than convex models. Quasiconvex functions are characterized by the property that every level set is convex and pseudoconvex functions are such that the vanishing of the gradient ensures a global minimum. 

First-order characterizations for quasiconvexity and pseudoconvexity can be found in \cite{Arviel88,AlbertoLaura09,CrouzeixFerland82,Crouzeix98} for smooth functions and  \cite{Aussel94,Aussel98,BGJ13a,HoaKhanhTrinh18,Ivanov13,Luc93,Soleimani-damaneh07,Yang05} for nonsmooth ones. 
 The well-known second-order necessary condition for the quasiconvexity of $\mathcal{C}^2$-smooth functions (see for instance \cite{Avriel72,AlbertoLaura09,Crouzeix,Crouzeix98}) states that the Hessian matrix of a quasiconvex function is positive-semidefinite on the subspace orthogonal to its gradient.  Furthermore, if the Hessian matrix of a $\mathcal{C}^2$-smooth function is positive definite on the subspace orthogonal to its gradient then the given function is strictly pseudoconvex \cite{Crouzeix98}.  Relaxing the $\mathcal{C}^2$-smoothness property, several authors have characterized the quasiconvexity and pseudoconvexity by using various kinds of generalized second-order derivatives.  Using Taylor's formula and an estimation formula of generalized Hessian, Luc \cite{Luc95} established the necessary and sufficient conditions for quasiconvexity of $\mathcal{C}^{1,1}$-smooth functions. In \cite{GI03} Ginchev and Ivanov  introduced the concept of second-order upper Dini-directional derivatives and used it to characterize the pseudoconvexity of radially upper semicontinuous functions.
 Recently, by using the theory of viscosity solutions of partial differential equations, Barron, Goebel, 
 and Jensen \cite{BGJ13b} obtained some necessary conditions and sufficient ones for the quasiconvexity of upper semicontinuous functions.
 
Although the Fr\'echet and the Mordukhovich second-order subdifferentials 
have a significant role in variational analysis and its applications \cite{BorisNamYen06,Mordukhovich06,Rockafellar98}, it seems to us that their use in characterizing quasiconvexity and pseudoconvexity of nonsmooth functions has not been studied, so far.
This motivates us to find out some necessary conditions  and sufficient ones for  the quasiconvexity and  pseudoconvexity of $\mathcal{C}^{1,1}$-smooth functions by using the  Fr\'echet and Mordukhovich second-order subdifferentials. For the necessity, we prove that the Fr\'echet second-order subdifferential of a pseudoconvex function is positive semidefinite on the subspace orthogonal to its gradient  while the Mordukhovich second-order subdifferential of a quasiconvex function  is only positive semidefinite along its some selection. 
For the sufficiency, we propose two conditions guaranteeing the strict pseudoconvexity.
The first one is the positive definiteness of the Mordukhovich second-order subdifferential of a given function on the subspace orthogonal to its gradient. The second one claims that the Fr\'echet second-order subdifferential of a given function has some selection  which is positive on the subspace orthogonal to its gradient. Moreover, a second-order sufficient condition for the strict quasiconvexity is also established by using Fr\'echet second-order subdifferentials. 
The obtained results are analyzed and illustrated by suitable examples.

The paper is organized as follows. Some background material from variational analysis and generalized convexity are recalled in  Section 2.  Section 3 presents some second-order conditions for quasiconvexity and pseudoconvexity of  $\mathcal{C}^{1,1}$-smooth functions. Sufficient conditions are given in Section~4. Conclusions and further investigations are discussed in the last section.

\section{Preliminaries}
 We begin with some notions from \cite{Mordukhovich06} which will be needed in the sequel. Let $F$ be a set-valued mapping  between Euclidean spaces $\mathbb{R}^n$ and $\mathbb{R}^m$. As usual, the effective domain and the graph of $F$ are given, respectively, by
 $$
 \text{dom}F:=\{x\in \mathbb{R}^n|F(x)\ne\emptyset\}\quad\text{and}\quad\text{gph}F=\{(x,y)\in\mathbb{R}^n\times\mathbb{R}^m|y\in F(x)\}.
 $$
The \textit{sequential Painlev\'e-Kuratowski outer limit}  of $F$ as $x\rightarrow\bar{x}$ is defined as
\begin{equation}\label{OuterLimit}
\begin{array}{rl}
\underset{x\rightarrow\bar{x}}{\Limsup}\;F(x):
=\{y\in \mathbb{R}^n\, |\,  &\exists \mbox{ sequences } x_k\to \bar x,\ y_k\rightarrow y,\\
&\mbox {with } y_k\in F(x_k)\ \mbox{for all}\  k=1,2,\dots
\}.
\end{array}
\end{equation}
Let us consider an extended-real-valued function $\varphi:\mathbb{R}^n\rightarrow\overline{\mathbb{R}}:=(-\infty,\infty]$. We always assume that $\varphi$  is proper and lower semicontinuous. The \textit{Fr\'echet subdifferential} of $\varphi$ at $\bar{x}\in\text{dom}\varphi:=\{x\in\mathbb{R}^n:\varphi(x)<\infty\}$ 
(known as the presubdifferential and as the regular or viscosity subdifferential) is
\begin{equation}\label{FrechetSubdifferential}
\widehat \partial\varphi\left( {\bar{x} } \right): = \left\{ {x^* \in {\mathbb{R}^n}\, |\,\mathop {\lim \inf }\limits_{x \to \overline x } \frac{{\varphi (x) - \varphi \left( {\bar{x} } \right) - \left\langle {{x^*},x - \bar{x}} \right\rangle }}{{\left\| {x - \bar{x} } \right\|}} \ge 0} \right\}.
\end{equation}
Then the \textit{Mordukhovich subdifferential} of $\varphi$ at $\bar{x}$ (known also the general/basic or limiting subdifferential) is defined via the outer limit \eqref{OuterLimit} 
\begin{equation}\label{MordukhovichSubdifferential}
\partial \varphi(\bar{x}):= \underset{x \overset{\varphi}{\to} \bar{x}} {\Limsup} \; \widehat{\partial}\varphi(x),
\end{equation}
where $x \overset{\varphi}{\to} \bar{x}$ signifies that $x\rightarrow\bar{x}$ with $\varphi(x)\rightarrow \varphi(\bar{x})$. Observe that both Fr\'echet and Mordukhovich subdifferentials reduce to the classical Fr\'echet derivative for continuously differentiable functions.

Given a set $\Omega\subset\mathbb{R}^n$ with its indicator function $\delta_\Omega(x)$ equal to $0$ for $x\in\Omega$ and to $\infty$ otherwise, the Fr\'echet and the Mordukhovich \textit{normal cones} to $\Omega$ at $\bar{x}\in\Omega$ are defined, respectively, via the corresponding subdifferentials \eqref{FrechetSubdifferential} and \eqref{MordukhovichSubdifferential} by
\begin{equation} \label{NormalCones}
\widehat{N}(\bar{x};\Omega):=\widehat{\partial}\delta_\Omega(\bar{x})
\quad\text{and}\quad
N(\bar{x};\Omega):=\partial\delta_\Omega(\bar{x}).
\end{equation}
The Fr\'echet and Mordukhovich coderivatives of $F$ at $(\bar{x},\bar{y})\in\text{gph}F$ are defined, respectively, via corresponding normal cones \eqref{NormalCones} by
$$
D^*F(\overline x ,\overline y )(y^*): = \{ x^* \in {\mathbb{R}^n}:({x^*}, - {y^*}) \in {N}((\bar{x},\bar{y}), \text{gph}F) \},
$$
$$
\widehat D^*F(\overline x ,\overline y )(y^*): = \{ x^* \in {\mathbb{R}^n}:({x^*}, - {y^*}) \in \widehat{N}((\bar{x},\bar{y}), \text{gph}F) \}.
$$
We omit $\bar{y} = f(\bar{x})$ in the above coderivative notions if $F:= f:\mathbb{R}^n \to \mathbb{R}^m$ is single-valued.
\begin{Definition}\rm 
Let $\varphi: \mathbb{R}^n \to \overline{\mathbb{R}}$  be a function with a finite value at $\bar{x}$. 
\begin{itemize}
	\item[(i)] For any $\bar{y} \in \partial \varphi(\bar{x})$, the map $\partial^2 \varphi(\bar{x},\bar{y}): \mathbb{R}^n \rightrightarrows \mathbb{R}^n$ with the values
	$$
	\partial^2 \varphi(\bar{x},\bar{y})(u)= (D^*\partial\varphi)(\bar{x},\bar{y})(u) \quad (u \in \mathbb{R}^n)
	$$
	is said to be the \textit{Mordukhovich second-order subdifferential} of $\varphi$ at $\bar{x}$ relative to $\bar{y}$. 
	\item[(ii)] For any $\bar{y} \in \widehat{\partial} \varphi(\bar{x})$, the map $\widehat{\partial}^2 \varphi(\bar{x},\bar{y}): \mathbb{R}^n \rightrightarrows \mathbb{R}^n$ with the values
	$$
	\widehat{\partial}^2 \varphi(\bar{x},\bar{y})(u)= (\widehat{D}^*\widehat{\partial}\varphi)(\bar{x},\bar{y})(u) \quad (u \in \mathbb{R}^n)
	$$
	is said to be the \textit{Fr\'echet second-order subdifferential } of $\varphi$ at $\bar{x}$ relative to $\bar{y}$. 
\end{itemize}
We omit $\bar{y} = \nabla \varphi(\bar{x})$ in the above second-order subdifferentials if $\varphi\in\mathcal{C}^1$ around $\bar{x}$, i.e., continuously Fr\'echet differentiable in a neighborhood of $\bar{x}$.
\end{Definition}
In general, the Fr\'echet second-order subdifferential and the Mordukhovich one are incomparable. However, if $\varphi\in\mathcal{C}^1$ around $\bar{x}$, then
\begin{equation}\label{Frechet_Mordukhovich}
\widehat{\partial}^2 \varphi(\bar{x})(u)\subset\partial^2 \varphi(\bar{x})(u), \quad \forall u \in \mathbb{R}^n.
\end{equation}
If $\varphi\in\mathcal{C}^{1,1}$ around $\bar{x}$, i.e., Fr\'echet  differentiable around $\bar{x}$ with the gradient $\nabla\varphi$ being locally Lipschitzian around $\bar{x}$ then the calculation of second-order subdifferentials can be essentially simplified due to the following scalarization formulas (see \cite[Proposition 3.5]{BorisNamYen06} and \cite[Proposition 1.120]{Mordukhovich06})
\begin{equation}\label{ScalarizationFormular}
\widehat{\partial}^2 \varphi(\bar{x})(u)=\widehat\partial\langle u,\nabla\varphi\rangle(\bar{x}), \quad
{\partial}^2 \varphi(\bar{x})(u)=\partial\langle u,\nabla\varphi\rangle(\bar{x}).
\end{equation}
In this case, Mordukhovich second-order subdifferentials are nonempty
 \cite[Corollary~2.25]{Mordukhovich06} while 
Fr\'echet ones may be empty. If $\varphi\in\mathcal{C}^2$ around $\bar{x}$, i.e., $\varphi$ is twice continuously Fr\'echet differentiable in a
neighborhood of $\bar{x}$, then
\begin{equation}\label{C2_Case}
\partial^2 \varphi(\bar{x})(u)=\widehat{\partial}^2 \varphi(\bar{x})(u)= \{\nabla^2 \varphi(\bar{x})u \}, \quad \forall u \in \mathbb{R}^n.
\end{equation}

\medskip
Let us recall some well-known notions of generalized convexity.
\begin{Definition}\rm  \text{} 
\begin{itemize}
	\item[(a)] A function $\varphi: \mathbb{R}^n \to \mathbb{R}$ is said to be \textit{quasiconvex}  if
	$$
	\varphi((1-\lambda)x + \lambda y) \leq \max\{\varphi(x),\varphi(y)\}
	$$
	for every $x, y \in \mathbb{R}^n$  and for every $\lambda \in [0,1]$.
	\item[(b)] A function $\varphi: \mathbb{R}^n \to \mathbb{R}$ is said to be \textit{strictly quasiconvex}  if
	$$
	\varphi((1-\lambda)x + \lambda y) < \max\{\varphi(x),\varphi(y)\}
	$$
	for every $x, y \in \mathbb{R}^n, x\ne y$  and for every $\lambda \in (0,1)$.
	\item[(c)] A differentiable function $\varphi: \mathbb{R}^n \to \mathbb{R}$ is called \textit{pseudoconvex}  if
	$$
	x,y \in \mathbb{R}^n, \varphi(x)>\varphi(y) \Longrightarrow \langle \nabla \varphi(x), y - x \rangle <0.
	$$
	\item[(d)] A differentiable function $\varphi: \mathbb{R}^n \to \mathbb{R}$ is called \textit{strictly pseudoconvex}  if
	$$
	x,y \in \mathbb{R}^n,\; x\ne y,\;  \varphi(x)\geq\varphi(y) \Longrightarrow \langle \nabla \varphi(x), y - x \rangle <0.
	$$
\end{itemize}
\end{Definition}
It follows, immediately, from the given definitions, that a strictly quasiconvex (pseudoconvex)
function is quasiconvex (pseudoconvex). For differentiable functions,  (strict) pseudoconvexity implies (strict) quasiconvexity. 
The next theorem shall point out that within the class of (strictly) quasiconvex functions, (strict) pseudoconvexity may be specified by means of its behaviour
at a critical point.

\begin{Theorem}{\rm \cite[Theorem 3.2.9]{AlbertoLaura09}}\label{QuasiPseudo}
	Let $\varphi:\mathbb{R}^n\rightarrow\mathbb{R}$ be a continuous differentiable function. Then, $\varphi$ is (strictly) pseudoconvex if
	and only if the following conditions hold:
	\begin{itemize}
		\item[{\rm (i)}] $\varphi$ is quasiconvex;
		\item[{\rm (ii)}] If $\nabla\varphi(x)=0$ then $x$ is a (strict) local minimum for $\varphi$. 
	\end{itemize}
\end{Theorem}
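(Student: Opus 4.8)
The plan is to prove both implications of the equivalence, handling the plain and the parenthetical ``strict'' versions in parallel. Since the excerpt already records that (strict) pseudoconvexity implies (strict) quasiconvexity, the necessity of condition (i) comes for free, and the real work lies in the sufficiency. As a preliminary I would record the classical first-order description of quasiconvexity for a differentiable $\varphi$: whenever $\varphi(y)\le\varphi(x)$ one has $\langle\nabla\varphi(x),y-x\rangle\le 0$. This is obtained by restricting $\varphi$ to the segment $[x,y]$, where quasiconvexity gives $\varphi((1-t)x+ty)\le\varphi(x)$, and differentiating this inequality at $t=0^{+}$.

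For the necessity direction the argument is immediate. Quasiconvexity is the stated consequence of pseudoconvexity; one also sees it directly, since if $\varphi$ failed to be quasiconvex its continuous restriction to some $[x,y]$ would attain an interior maximum at a point $w$ with $\varphi(w)>\varphi(x)$ and $\varphi(w)>\varphi(y)$, and applying the definition of pseudoconvexity to the pairs $(w,x)$ and $(w,y)$ would force $\langle\nabla\varphi(w),x-y\rangle$ to be simultaneously negative and positive (because $x-w$ and $y-w$ are antiparallel), a contradiction. For (ii), if $\nabla\varphi(x_0)=0$ but $\varphi(y)<\varphi(x_0)$ for some $y$, then pseudoconvexity yields $0=\langle\nabla\varphi(x_0),y-x_0\rangle<0$, impossible; hence $x_0$ is a global, and a fortiori local, minimizer. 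The strict versions are identical, with $\varphi(y)\le\varphi(x_0)$ and $y\ne x_0$ forcing $x_0$ to be a strict minimizer.

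The core is the sufficiency. Assuming (i) and (ii), take $x,y$ with $\varphi(x)>\varphi(y)$; the first-order lemma gives $\langle\nabla\varphi(x),y-x\rangle\le 0$, and I argue by contradiction that equality cannot hold. First I would establish a perturbation lemma: if $\varphi(p)>\varphi(q)$ and $\langle\nabla\varphi(p),q-p\rangle=0$, then $\nabla\varphi(p)=0$. Indeed, continuity produces a ball about $q$ on which $\varphi<\varphi(p)$, so the first-order lemma gives $\langle\nabla\varphi(p),q'-p\rangle\le 0$ for every $q'$ in that ball; writing $q'=q+w$ and using $\langle\nabla\varphi(p),q-p\rangle=0$ yields $\langle\nabla\varphi(p),w\rangle\le 0$ for all small $w$, and replacing $w$ by $-w$ forces $\nabla\varphi(p)=0$. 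Applying this to $(x,y)$ gives $\nabla\varphi(x)=0$, so by (ii) $x$ is a local minimizer; combined with $\varphi\le\varphi(x)$ along $[x,y]$ (quasiconvexity), $\varphi$ is constant equal to $\varphi(x)$ on an initial portion of the segment.

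The final step locates a forbidden critical point. Let $\bar t=\sup\{t:\varphi((1-s)x+sy)=\varphi(x)\ \text{for all}\ s\in[0,t]\}$; then $0<\bar t<1$, positivity from the local minimality of $x$ and $\bar t<1$ from $\varphi(y)<\varphi(x)$, and $\bar z:=(1-\bar t)x+\bar t y$ satisfies $\varphi(\bar z)=\varphi(x)$ by continuity. Since $\varphi$ is constant on $[x,\bar z]$, the directional derivative along the segment vanishes, so $\langle\nabla\varphi(\bar z),y-\bar z\rangle=0$; as $\varphi(\bar z)=\varphi(x)>\varphi(y)$, the perturbation lemma forces $\nabla\varphi(\bar z)=0$. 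But by the definition of $\bar t$ there are points arbitrarily close to $\bar z$ further along the segment with $\varphi<\varphi(\bar z)$, so $\bar z$ is not a local minimizer, contradicting (ii); this proves pseudoconvexity. For the strict statement I would read (i) as strict quasiconvexity: the case $\varphi(x)>\varphi(y)$ follows from the pseudoconvexity just proved, while for $\varphi(x)=\varphi(y)$ with $x\ne y$ strict quasiconvexity places $\varphi$ strictly below $\varphi(x)$ at every interior point $w$ of $[x,y]$, and pseudoconvexity applied to $w$ gives $\langle\nabla\varphi(x),y-x\rangle<0$. The main obstacle is exactly this sufficiency loop, namely manufacturing the point $\bar z$ and certifying both that its gradient vanishes and that it is not a local minimum; the necessity and the first-order lemma are routine.
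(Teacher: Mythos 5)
The paper offers no proof of this theorem: it is quoted from \cite[Theorem 3.2.9]{AlbertoLaura09}, so the comparison can only be against the standard textbook argument. Your proof of the \emph{plain} version is complete and correct, and it is essentially that classical argument: the first-order characterization of quasiconvexity ($\varphi(y)\le\varphi(x)\Rightarrow\langle\nabla\varphi(x),y-x\rangle\le 0$, i.e.\ \cite[Proposition 1]{Crouzeix98}, which the paper itself uses elsewhere), your ``perturbation lemma'' upgrading the single equality $\langle\nabla\varphi(p),q-p\rangle=0$ with $\varphi(p)>\varphi(q)$ to $\nabla\varphi(p)=0$, and the location of the endpoint $\bar z$ of the maximal interval of constancy, which is shown to be a critical point that is not a local minimum, contradicting (ii). All the small verifications there check out: $0<\bar t<1$, $\varphi(\bar z)=\varphi(x)$ by continuity, $\langle\nabla\varphi(\bar z),y-\bar z\rangle=0$ since the two-sided derivative of the (differentiable) restriction agrees with its vanishing left derivative, and points with value strictly below $\varphi(\bar z)$ accumulate at $\bar z$ because quasiconvexity forces $\varphi\le\varphi(x)$ on the whole segment.

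The one substantive issue is the strict case, where you announce ``I would read (i) as strict quasiconvexity.'' That is not the statement you were asked to prove: in the theorem as transcribed, the parenthetical ``strict'' modifies only ``pseudoconvex'' and ``local minimum,'' while (i) remains plain quasiconvexity. You should flag explicitly that, read literally, the strict half is \emph{false} for $n\ge 2$: the linear function $\varphi(z_1,z_2)=z_2$ is quasiconvex and has no critical points, so (i) and the strict (ii) hold vacuously, yet for $x=(0,0)$, $y=(1,0)$ one has $\varphi(x)=\varphi(y)$ and $\langle\nabla\varphi(x),y-x\rangle=0$, so $\varphi$ is not strictly pseudoconvex. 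So your strengthening of (i) is not a convenience but a necessity, and with it your argument is sound: necessity of strict quasiconvexity is exactly the implication ``strictly pseudoconvex $\Rightarrow$ strictly quasiconvex'' recorded in the paper's preliminaries, and your sufficiency step (equal values plus an interior point of strictly smaller value, then pseudoconvexity applied to the pair $(x,w)$) is correct. This matches the correct textbook formulation of the strict case; it also does no damage to the paper's later use of the theorem, since the proofs of the sufficiency results in Section 4 argue from the negation of \emph{strict} quasiconvexity via Lemma~\ref{Quasiconvex} and hence actually establish the stronger form of (i). In short: plain version fully correct and standard; strict version correct only after a repair of the statement, which you made silently and should instead have called out with a counterexample.
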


\medskip
Finally, we consider a lemma which will be used in the subsequent.
\begin{Lemma}\label{Quasiconvex}
	Let $\varphi:\mathbb{R}^n\rightarrow\mathbb{R}$ be a differentiable function. If $\varphi$ is not strictly quasiconvex, then there exist $x_1, x_2\in\mathbb{R}^n, x_1\ne x_2$  and $t_0\in (0,1)$ such that $\langle\nabla\varphi(x_1+t_0(x_2-x_1)),x_2-x_1\rangle=0$ and
	\begin{equation}\label{LocalMax}
	\varphi(x_1+t(x_2-x_1))\leq\varphi(x_1+t_0(x_2-x_1)), \quad \forall t\in[0,1].
	\end{equation}	
\end{Lemma}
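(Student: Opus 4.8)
The plan is to reduce the statement to a one-dimensional extremum problem along a suitable segment and then invoke Fermat's rule. First I would unwind the definition of strict quasiconvexity: since $\varphi$ is not strictly quasiconvex, there exist $x_1\ne x_2$ and $\lambda_0\in(0,1)$ for which
$$
\varphi\big((1-\lambda_0)x_1+\lambda_0 x_2\big)\ge\max\{\varphi(x_1),\varphi(x_2)\}.
$$
These two points $x_1,x_2$ will be the ones asserted in the lemma; it remains only to locate the parameter $t_0$.

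Next I would introduce the restriction of $\varphi$ to the segment joining $x_1$ and $x_2$, namely $g(t):=\varphi(x_1+t(x_2-x_1))$ for $t\in[0,1]$. Because $\varphi$ is differentiable, $g$ is continuous on the compact interval $[0,1]$ and differentiable on $(0,1)$ with $g'(t)=\langle\nabla\varphi(x_1+t(x_2-x_1)),x_2-x_1\rangle$ by the chain rule. Hence $g$ attains its maximum $M:=\max_{t\in[0,1]}g(t)$ at some point of $[0,1]$, and the inequality from the first step reads $g(\lambda_0)\ge\max\{g(0),g(1)\}$.

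The key point is that the maximizer can be chosen in the open interval $(0,1)$. I would argue by cases: if $M>\max\{g(0),g(1)\}$, then no maximizer can lie at an endpoint, so any maximizer $t_0$ satisfies $t_0\in(0,1)$; if instead $M=\max\{g(0),g(1)\}$, then $g(\lambda_0)\ge M$ combined with $g(\lambda_0)\le M$ forces $g(\lambda_0)=M$, so one may simply take $t_0=\lambda_0\in(0,1)$. Either way there is an interior global maximizer $t_0\in(0,1)$.

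Finally, since $t_0$ is an interior maximizer of the differentiable function $g$, Fermat's rule yields $g'(t_0)=0$, that is $\langle\nabla\varphi(x_1+t_0(x_2-x_1)),x_2-x_1\rangle=0$, while the global maximality of $t_0$ gives $g(t)\le g(t_0)$ for all $t\in[0,1]$, which is precisely \eqref{LocalMax}. No step presents a genuine difficulty; the only place requiring care is the elementary case distinction that pushes the maximizer into the interior, which is exactly where the strict (as opposed to merely non-strict) quasiconvexity is being used.
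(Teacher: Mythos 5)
Your proof is correct and follows essentially the same route as the paper's: negate strict quasiconvexity, restrict $\varphi$ to the segment via $g(t)=\varphi(x_1+t(x_2-x_1))$, use Weierstrass to get a maximizer on $[0,1]$, and apply Fermat's rule at an interior maximizer. The only difference is that your explicit case distinction ($M>\max\{g(0),g(1)\}$ versus $M=\max\{g(0),g(1)\}$, taking $t_0=\lambda_0$ in the latter case) spells out the interiority argument that the paper compresses into a single sentence.
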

\begin{proof} Since $\varphi$ is not strictly quasiconvex, there exist $x_1,x_2\in\mathbb{R}^n, \alpha\in(0,1)$ such that $x_1\ne x_2$ and
	\begin{equation}\label{NotQuasiconvex}
	\varphi((1-\alpha)x_1+\alpha x_2)\geq\max\{\varphi(x_1),\varphi(x_2)\}.
	\end{equation}
	Consider the function $f:\mathbb{R}\rightarrow\mathbb{R}$ given by
	$$
	f(t)=\varphi(x_1+t(x_2-x_1)),\quad \forall t\in\mathbb{R}.
	$$
	Then, thanks to the Weierstrass theorem and \eqref{NotQuasiconvex},
we can find a number $t_0\in (0,1)$ for which the function $f$
admits a maximum on the interval $[0, 1]$. Hence, \eqref{LocalMax} is satisfied and
	by the Fermat rule we have
	$$
	0=\nabla f(t_0)=\langle\nabla\varphi(x_1+t_0(x_2-x_1)),x_2-x_1\rangle.
	$$
	$\hfill\Box$
\end{proof}

\section{Necessary Conditions}
Let us recall the well-known second-order necessary condition for quasiconvexity of $\mathcal{C}^2$-smooth functions.
\begin{Theorem}\label{NecessaryQCC2} {\rm (see \cite[Lemma 6.2]{Avriel72} or \cite[Theorem 3.4.2]{AlbertoLaura09})}
Let $\varphi:\mathbb{R}^n \to \mathbb{R}$ be a $\mathcal{C}^2$-smooth function. If $\varphi$ is quasiconvex, then 
\begin{equation}\label{Eq_NecessaryQCC2}
x, u \in \mathbb{R}^n, \langle \nabla \varphi(x),u \rangle = 0  \Longrightarrow \langle \nabla^2\varphi(x)u,u \rangle \geq 0.
\end{equation} 
\end{Theorem}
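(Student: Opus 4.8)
The plan is to argue by contradiction through the one-dimensional restriction of $\varphi$ along the direction $u$. Suppose that $\langle \nabla\varphi(x),u\rangle = 0$ but, contrary to the claim, $\langle \nabla^2\varphi(x)u,u\rangle < 0$. First I would introduce the scalar function $g:\mathbb{R}\to\mathbb{R}$ defined by $g(t) := \varphi(x+tu)$. Since $\varphi\in\mathcal{C}^2$, the chain rule gives $g'(t) = \langle \nabla\varphi(x+tu),u\rangle$ and $g''(t) = \langle \nabla^2\varphi(x+tu)u,u\rangle$, both continuous in $t$. Evaluating at the origin yields $g'(0) = \langle\nabla\varphi(x),u\rangle = 0$ and $g''(0) = \langle \nabla^2\varphi(x)u,u\rangle < 0$.

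Next I would use these two facts to show that $t=0$ is a strict local maximizer of $g$. Because $g''$ is continuous and $g''(0) < 0$, there is some $\delta > 0$ with $g''(t) < 0$ for all $t\in(-\delta,\delta)$; combined with $g'(0) = 0$ this forces $g(t) < g(0)$ for every $t$ with $0 < |t| < \delta$. Equivalently, $\varphi(x+tu) < \varphi(x)$ whenever $0 < |t| < \delta$. One could alternatively reach the same conclusion directly from the second-order Taylor expansion $g(t) = g(0) + \tfrac12 g''(0)t^2 + o(t^2)$.

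Finally I would exploit the symmetry of the direction $\pm u$ to contradict quasiconvexity. Fixing any $s$ with $0 < s < \delta$ and setting $y_1 := x - su$ and $y_2 := x + su$, the previous step gives $\varphi(y_1) < \varphi(x)$ and $\varphi(y_2) < \varphi(x)$, while $x = \tfrac12 y_1 + \tfrac12 y_2$ is their midpoint and $y_1 \neq y_2$. Applying the definition of quasiconvexity with $\lambda = \tfrac12$ then yields
$$
\varphi(x) = \varphi\left(\tfrac12 y_1 + \tfrac12 y_2\right) \leq \max\{\varphi(y_1),\varphi(y_2)\} < \varphi(x),
$$
which is absurd. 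Hence the assumption $\langle \nabla^2\varphi(x)u,u\rangle < 0$ is untenable and the implication \eqref{Eq_NecessaryQCC2} follows.

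The argument is elementary, so there is no genuine obstacle; the only step requiring care is the choice of the two perturbed points. One must move in \emph{both} the $+u$ and $-u$ directions, so that the point $x$ at which the gradient is orthogonal to $u$ becomes the midpoint of two points carrying strictly smaller values of $\varphi$. It is precisely this placement of $x$ as a convex combination that activates the quasiconvexity inequality and produces the contradiction; perturbing in a single direction would not suffice.
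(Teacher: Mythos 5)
Your proof is correct. Note, however, that the paper does not prove this statement at all: Theorem~\ref{NecessaryQCC2} is quoted as a known result, with the proof deferred to the cited references (Lemma~6.2 of Avriel and Theorem~3.4.2 of Cambini--Martein), so there is no internal argument to compare yours against. Your argument is the standard elementary one and it holds up under scrutiny: the contradiction hypothesis $\langle \nabla^2\varphi(x)u,u\rangle<0$ forces $u\neq 0$, so the two points $y_1=x-su$ and $y_2=x+su$ are genuinely distinct; the continuity of $g''$ (or equivalently the Taylor expansion) makes $t=0$ a strict local maximizer of $g(t)=\varphi(x+tu)$; and applying the quasiconvexity inequality at the midpoint with $\lambda=\tfrac12$ yields the contradiction $\varphi(x)<\varphi(x)$. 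Your closing observation is also the right one to emphasize: moving in both directions $\pm u$ is essential, since a one-sided perturbation gives no convex combination to feed into the quasiconvexity definition. It is worth remarking that your ``interior maximum along a segment'' mechanism is the same device the paper itself uses in Lemma~\ref{Quasiconvex} (and then in the proofs of Theorems~\ref{SufficientPCC11} and~\ref{SufficientQC1}), where failure of (strict) quasiconvexity produces a maximizer $t_0\in(0,1)$ along a segment with vanishing directional derivative; your proof runs that logic in reverse, and in the smooth $\mathcal{C}^2$ setting it is entirely self-contained.
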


By using the mean value inequality in terms of Mordukhovich subdifferential  for Lipschitzian functions \cite[Corollary~3.51 ]{Mordukhovich06} we extend the above result to $\mathcal{C}^{1,1}$-smooth functions.  
\begin{Proposition}\label{MeanValue}
	Let $\varphi:\mathbb{R}^n\rightarrow\mathbb{R}$ be a Lipschitz continuous function on open set containing $[a,b]$. Then one has
	$$
	\langle x^*,b-a\rangle\geq \varphi(b)-\varphi(a)\quad\text{for some}\quad x^*\in\partial\varphi(c),\quad c\in [a,b).
	$$
\end{Proposition}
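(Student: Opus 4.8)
The plan is to obtain this robust, exact-subgradient mean value inequality from the approximate mean value theorem for lower semicontinuous functions (Zagrodny's theorem, which underlies the cited \cite[Corollary~3.51]{Mordukhovich06}), upgrading its sequential conclusion by means of the Lipschitz hypothesis. That theorem, applied on the segment $[a,b]$, yields a point $c\in[a,b)$ together with sequences $x_k\to c$ with $\varphi(x_k)\to\varphi(c)$ and Fr\'echet subgradients $x_k^*\in\widehat{\partial}\varphi(x_k)$ satisfying
$$
\liminf_{k\to\infty}\,\langle x_k^*,\,b-a\rangle \;\ge\; \varphi(b)-\varphi(a).
$$
The crux is to pass from this ``fuzzy'' estimate, whose subgradients sit at the nearby points $x_k$, to a single limiting subgradient $x^*$ located at the single point $c$; this upgrade is exactly where local Lipschitz continuity is used.

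First I would invoke the approximate mean value theorem to produce $c$ and the pairs $(x_k,x_k^*)$ above, recording that the conclusion $c\in[a,b)$, with the right endpoint $b$ strictly excluded, is part of that theorem. Next, using that $\varphi$ is Lipschitz with some constant $L$ on an open set containing $[a,b]$, every Fr\'echet subgradient is norm-bounded by $L$, so $\|x_k^*\|\le L$ for all $k$. Hence $\{x_k^*\}$ lies in a compact ball, and after passing to a subsequence it converges to some $x^*$ with $\|x^*\|\le L$.

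It then remains to identify $x^*$ as a limiting subgradient at $c$ and to pass to the limit in the estimate. Since $x_k\to c$ with $\varphi(x_k)\to\varphi(c)$, $x_k^*\in\widehat{\partial}\varphi(x_k)$, and $x_k^*\to x^*$, the very definition of the Mordukhovich subdifferential as the outer limit in \eqref{MordukhovichSubdifferential}, namely $\partial\varphi(c)=\underset{x\overset{\varphi}{\to}c}{\Limsup}\,\widehat{\partial}\varphi(x)$, gives $x^*\in\partial\varphi(c)$. Taking limits along the subsequence,
$$
\langle x^*,\,b-a\rangle=\lim_{k\to\infty}\langle x_k^*,\,b-a\rangle\;\ge\;\liminf_{k\to\infty}\langle x_k^*,\,b-a\rangle\;\ge\;\varphi(b)-\varphi(a),
$$
which is the asserted inequality for this $c\in[a,b)$ and $x^*\in\partial\varphi(c)$.

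The main obstacle is precisely this transition from the multi-point $\liminf$ conclusion of the approximate mean value theorem to an exact single-subgradient statement. Its resolution rests on two ingredients acting together: the Lipschitz bound, which supplies the compactness needed to extract a convergent subsequence of subgradients, and the graph-closedness (robustness) inherent in the limiting subdifferential, which guarantees that the limit $x^*$ is itself a subgradient at the limit point $c$. A secondary point to verify is that $c$ genuinely lies in the half-open segment $[a,b)$, so that the statement holds as written; should the cited theorem instead provide the estimate in the form $\liminf_k\langle x_k^*,\,b-x_k\rangle\ge\frac{\|b-c\|}{\|b-a\|}\big(\varphi(b)-\varphi(a)\big)$, one recovers the desired inequality by writing $c=a+t_0(b-a)$ with $t_0\in[0,1)$ and dividing by the strictly positive factor $1-t_0$, which again uses $c\neq b$.
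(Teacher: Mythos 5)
Your proof is correct and follows essentially the same route as the paper, which gives no argument of its own but simply imports the result as \cite[Corollary~3.51]{Mordukhovich06}: your derivation---applying the approximate mean value theorem to produce $c\in[a,b)$ and sequences $x_k\overset{\varphi}{\to}c$, $x_k^*\in\widehat{\partial}\varphi(x_k)$ with $\liminf_{k}\langle x_k^*,b-a\rangle\ge\varphi(b)-\varphi(a)$, then using the Lipschitz bound $\|x_k^*\|\le L$ for compactness and the outer-limit definition \eqref{MordukhovichSubdifferential} to conclude $x^*\in\partial\varphi(c)$---is precisely the standard proof of that corollary. Your fallback step, recovering the inequality from the estimate $\liminf_k\langle x_k^*,b-x_k\rangle\ge\frac{\|b-c\|}{\|b-a\|}\bigl(\varphi(b)-\varphi(a)\bigr)$ by writing $c=a+t_0(b-a)$ and dividing by $1-t_0>0$, is also sound, since $\langle x_k^*,c-x_k\rangle\to 0$ by the uniform bound on $\|x_k^*\|$.
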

\begin{Theorem}\label{NecessaryQC}  Let $\varphi:\mathbb{R}^n\rightarrow\mathbb{R}$ be a $\mathcal{C}^{1,1}$-smooth function. If $\varphi$ is quasiconvex then 
	\begin{equation}\label{Eq_NecessaryQC}
		x, u \in \mathbb{R}^n, \langle \nabla \varphi(x), u \rangle = 0 \Longrightarrow \langle z, u \rangle \geq 0 \; \text{for some} \; z \in {\partial}^2 \varphi(x)(u).
	\end{equation}
\end{Theorem}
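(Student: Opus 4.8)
The plan is to reduce the assertion to a one–dimensional statement along the direction $u$ and then extract the desired subgradient by a limiting argument. Fix $x,u\in\mathbb{R}^n$ with $\langle\nabla\varphi(x),u\rangle=0$ and set $g:=\langle u,\nabla\varphi(\cdot)\rangle$ and $h(t):=\varphi(x+tu)$. Since $\varphi\in\mathcal{C}^{1,1}$, the function $g$ is locally Lipschitz and $h$ is $\mathcal{C}^{1,1}$ with $h'(t)=\langle\nabla\varphi(x+tu),u\rangle=g(x+tu)$; in particular $g(x)=h'(0)=0$. Restricting the quasiconvexity of $\varphi$ to the line $\{x+tu:t\in\mathbb{R}\}$ shows that $h$ is quasiconvex on $\mathbb{R}$. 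By the scalarization formula \eqref{ScalarizationFormular} one has $\partial^2\varphi(x)(u)=\partial g(x)$, so it suffices to produce some $z\in\partial g(x)$ with $\langle z,u\rangle\ge 0$.

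Next I would exploit the shape of a differentiable quasiconvex function of one variable. A quasiconvex $h$ cannot possess a strict interior local maximum, for otherwise choosing $t_1<0<t_3$ sufficiently close to $0$ would violate $h(0)\le\max\{h(t_1),h(t_3)\}$. Combined with $h'(0)=0$, this rules out the simultaneous occurrence of $h'<0$ on a right half–neighborhood of $0$ and $h'>0$ on a left half–neighborhood of $0$ (which would force a strict local maximum at $0$). Consequently at least one of the two alternatives must hold: there is a sequence $\varepsilon_k\downarrow 0$ with $h'(\varepsilon_k)\ge 0$, or there is a sequence $s_k\uparrow 0$ with $h'(s_k)\le 0$.

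In the first alternative I would apply the mean value inequality (Proposition~\ref{MeanValue}) to $g$ on the segment $[x,x+\varepsilon_k u]$, obtaining $x_k^*\in\partial g(c_k)$ with $c_k\in[x,x+\varepsilon_k u)$ and $\varepsilon_k\langle x_k^*,u\rangle\ge g(x+\varepsilon_k u)-g(x)=h'(\varepsilon_k)\ge 0$, hence $\langle x_k^*,u\rangle\ge 0$; the second alternative is handled symmetrically on $[x+s_k u,x]$, where $g(x)-g(x+s_k u)=-h'(s_k)\ge 0$. In either case $c_k\to x$ and, $g$ being locally Lipschitz, the subgradients $x_k^*$ remain bounded, so along a subsequence $x_k^*\to z$. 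Since the Mordukhovich subdifferential of a locally Lipschitz function is outer semicontinuous (its graph is closed), we get $z\in\partial g(x)$ and $\langle z,u\rangle=\lim_k\langle x_k^*,u\rangle\ge 0$. Recalling $\partial g(x)=\partial^2\varphi(x)(u)$ then finishes the argument.

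The main obstacle is the passage to the limit: the mean value inequality only delivers subgradients at nearby points $c_k$ rather than at $x$ itself, so the crucial step is to invoke the robustness (closedness of graph together with local boundedness) of the Mordukhovich subdifferential in order to transfer the sign condition $\langle x_k^*,u\rangle\ge 0$ to a genuine element of $\partial^2\varphi(x)(u)$. A secondary subtlety is the correct bookkeeping of the two one–sided alternatives, so that the constructed points $c_k$ indeed converge to $x$ from the appropriate side and the inequality signs are preserved.
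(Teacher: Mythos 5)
Your argument is correct and rests on the same core machinery as the paper's proof---the scalarization $\partial^2\varphi(x)(u)=\partial\langle u,\nabla\varphi\rangle(x)$ from \eqref{ScalarizationFormular}, the mean value inequality of Proposition~\ref{MeanValue}, and local boundedness plus robustness (closed graph) of the Mordukhovich subdifferential of the locally Lipschitz scalarization to pass to the limit---but you feed quasiconvexity into it by a genuinely different route. The paper invokes the first-order characterization of quasiconvexity \cite[Proposition~1]{Crouzeix98} at the symmetric pair $x_k=x+(1/k)u$, $x'_k=x-(1/k)u$; since that characterization only says that the \emph{minimum} of the two directional derivatives is nonpositive, the paper must run the mean value inequality on both sides simultaneously, carry two bounded sequences $(z_k)$ and $(z'_k)$, and conclude from $\max\{\langle z_k,u\rangle,\langle z'_k,u\rangle\}\geq 0$ that one of the two limit points $z,z'\in\partial^2\varphi(x)(u)$ works. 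You instead restrict $\varphi$ to the line, note that the quasiconvex function $h(t)=\varphi(x+tu)$ admits no strict interior local maximum, and combine this with $h'(0)=0$ to get the dichotomy (a sequence $\varepsilon_k\downarrow 0$ with $h'(\varepsilon_k)\geq 0$, or $s_k\uparrow 0$ with $h'(s_k)\leq 0$), which selects the relevant side \emph{before} the limiting argument; a single application of Proposition~\ref{MeanValue} and a single limit point then suffice. Your version is more self-contained (it replaces the citation of Crouzeix's characterization by a two-line contradiction with the definition of quasiconvexity) and lighter in bookkeeping, while the paper's version is the more mechanical two-sided scheme that it reuses almost verbatim in the proofs of Theorems~\ref{NecessaryPC} and \ref{SufficientPCC11}.

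One trivial omission: you should dispose of the case $u=0$ separately, as the paper does. For $u=0$ the segments $[x,x+\varepsilon_k u]$ degenerate to a point, so Proposition~\ref{MeanValue} delivers nothing; but then $\langle u,\nabla\varphi\rangle\equiv 0$, hence $\partial^2\varphi(x)(0)=\{0\}$ and the conclusion $\langle z,u\rangle=0\geq 0$ is immediate.
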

\begin{proof}
	Let $x,u\in\mathbb{R}^n$ be such that $\langle\nabla\varphi(x),u\rangle=0$. If $u=0$ then 
	$\langle z,u\rangle= 0$ for all  $z\in{\partial}^2 \varphi(x)(u)$.
	Otherwise, consider the function $f:\mathbb{R}^n\rightarrow\mathbb{R}$ given by 
	$$
	f(y):=\langle\nabla\varphi(y),u\rangle\quad \forall y\in\mathbb{R}^n.
	$$ 
	Then, $f(x)=0$ and $f$ is locally Lipschitz continuous on $\mathbb{R}^n$ by the $\mathcal{C}^{1,1}$-smoothness of $\varphi$. Moreover, $\partial f$ is locally bounded (see \cite[Corollary~1.81]{Mordukhovich06} or \cite[Theorem~9.13]{Rockafellar98}), robust \cite[Proposition~8.7]{Rockafellar98} on $\mathbb{R}^n$ and for every $y\in\mathbb{R}^n$
	$$
	\partial f(y)=\partial\langle u,\nabla\varphi\rangle(y)=\partial^2\varphi(y)(u).
	$$ 
	For the sequences $x_k:=x+(1/k)u,\; x^\prime_k:=x-(1/k)u\; (k\in\mathbb{N})$, one has $x_k\rightarrow x, x^\prime_k\rightarrow x$ and, in view of Proposition~\ref{MeanValue}, there exist $\theta_k \in [0,1/k)$, $\theta^\prime_k\in (0,1/k]$ and $z_k\in\partial f(x+\theta_ku), z^\prime_k\in\partial f(x-\theta_ku)$ such that
	$$
	\langle z_k,(1/k)u\rangle=\langle z_k,x_k-x\rangle\geq f(x_k)-f(x)=f(x_k),
	$$
	$$
	\langle z^\prime_k,(1/k)u\rangle=\langle z^\prime_k,x-x^\prime_k\rangle\geq f(x)-f(x^\prime_k)=-f(x^\prime_k).
	$$
	By the quasiconvexity of $\varphi$, it follows from \cite[Proposition~1]{Crouzeix98} that
	\begin{eqnarray*}
		0&\geq& \min\{\langle\nabla\varphi(x_k),x^\prime_k-x_k\rangle, \langle\nabla\varphi(x^\prime_k),x_k-x^\prime_k\rangle\}\\
		&=& \min\{(-2/k)f(x_k), (2/k)f(x^\prime_k)\}\\
		&\geq& \min\{(-2/k^2)\langle z_k,u\rangle, (-2/k^2)\langle z^\prime_k,u\rangle\}.
	\end{eqnarray*}
	Therefore, $\max\{\langle z_k,u\rangle,\langle z^\prime_k,u\rangle\}\geq 0$ for all $k\in\mathbb{N}$. Since $\partial f$ is locally bounded at $x$, the sequences $(z_k), (z^\prime_k)$ are bounded. Without loss of generality, we can assume that $z_k\rightarrow z$ and $z^\prime_k\rightarrow z^\prime$. It follows that $\max\{\langle z,u\rangle,\langle z^\prime,u\rangle\}\geq 0$ and by the robustness of $\partial f$ we have $z,z^\prime\in\partial f(x)=\partial^2\varphi(x)(u)$. The proof is complete.
	$\hfill\Box$
\end{proof}

\medskip
The established necessary condition says that, for a quasiconvex function, the Mordukhovich second-order
subdifferential at one point is positive semidefinite along some its selection on the subspace orthogonal to its gradient at this point.  The following example shows that the positive semidefiniteness cannot be extended to the whole mentioned subspace even for pseudoconvex function.

\begin{Example}\rm \cite[Remark 3.1]{HuyTuyen16}
	Let $\varphi: \mathbb{R} \to \mathbb{R}$ be defined by
	$$
	\varphi(x):= \int_0^{|x|} {\phi (t)dt},
	$$
	where
	$$
	\phi(t) = \begin{cases}
	2t^2 + t^2 \sin \left(\frac{1}{t} \right)& \text{if}\qquad t > 0,\\
	0 & \text{if}\qquad t=0.\\
	\end{cases}
	$$
	Observe that $\varphi$ is a pseudoconvex $\mathcal{C}^{1,1}$-smooth function. Indeed, for every $x\in\mathbb{R}$, we have 
	$$
	\nabla \varphi(x) = \begin{cases}
	\phi(x)& \text{if}\qquad x \geq 0,\\
	-\phi(-x) & \text{if}\qquad x< 0.
	\end{cases}
	$$
	Hence, $\nabla\varphi$ is locally Lipschitz and so it is $\mathcal{C}^{1,1}$-smooth. Moreover,  $\nabla \varphi(x) = 0$ if and only if $x = 0$ and $0$ is a local minimum of $\varphi$. It follows from \cite[Theorem 3.2.7]{AlbertoLaura09} that $\varphi$ is a pseudoconvex function. Clearly, one has $\partial^2 \varphi(0)(u) = [-|u|,|u|]$ for each $u \in \mathbb{R}$. Thus, with $u\ne 0$, there exists $z^* \in \partial^2 \varphi(0)(u) $  such that $\langle z^*, u \rangle < 0$. 
\end{Example}

Although the pseudoconvexity does not imply the positive semidefiniteness of the second-order Mordukhovich subdifferential, it guarantees the positive semidefiniteness of the second-order Fr\'echet subdifferential.

\begin{Theorem}\label{NecessaryPC} Let $\varphi:\mathbb{R}^n\rightarrow\mathbb{R}$ be a $\mathcal{C}^{1,1}$-smooth function. If $\varphi$ is pseudoconvex then 
\begin{equation}\label{NormalCone_PC}
x, u \in \mathbb{R}^n, \langle \nabla \varphi(x), u \rangle = 0 \Longrightarrow \langle z, u \rangle \geq 0 \; \text{for all} \; z \in \widehat{\partial}^2 \varphi(x)(u).
\end{equation}
\end{Theorem}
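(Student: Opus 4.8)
The plan is to reduce the statement to a one-dimensional problem via the scalarization formula \eqref{ScalarizationFormular} and then exploit the fact that a critical point of a pseudoconvex function is a global minimizer. Fix $x,u\in\mathbb{R}^n$ with $\langle\nabla\varphi(x),u\rangle=0$; since the case $u=0$ is trivial, assume $u\ne 0$. By \eqref{ScalarizationFormular}, every $z\in\widehat{\partial}^2\varphi(x)(u)$ is a Fr\'echet subgradient at $x$ of the scalar function $f(y):=\langle\nabla\varphi(y),u\rangle$, and note that $f(x)=0$. Hence it suffices to prove $\langle z,u\rangle\ge 0$ for each $z\in\widehat{\partial}f(x)$.

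First I would extract the one-sided sign information coming from pseudoconvexity. Consider $g(t):=\varphi(x+tu)$, which is $\mathcal{C}^1$ with $g'(t)=\langle\nabla\varphi(x+tu),u\rangle=f(x+tu)$ and $g'(0)=0$. The restriction $g$ inherits pseudoconvexity from $\varphi$: if $g(s)>g(r)$, then applying the definition of pseudoconvexity at the points $x+su$ and $x+ru$ gives $(r-s)g'(s)<0$. Consequently $t=0$ must be a global minimizer of $g$, for otherwise $g(0)>g(b)$ for some $b$ would force $0=b\,g'(0)<0$, a contradiction; thus $g(s)\ge g(0)$ for all $s$. Using pseudoconvexity once more, for every $s<0$ either $g(s)>g(0)$, which yields $-s\,g'(s)<0$ and hence $g'(s)<0$, or $g(s)=g(0)$, in which case $s$ is itself a global minimizer and $g'(s)=0$ by Fermat's rule. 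In either case $g'(s)\le 0$ for all $s<0$, that is,
\begin{equation*}
f(x-tu)=\langle\nabla\varphi(x-tu),u\rangle\le 0\quad\text{for all }t>0.
\end{equation*}

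Finally I would feed this sign information into the Fr\'echet subgradient inequality, testing along the approach $y=x-tu$ with $t\downarrow 0$. Since restricting the approach can only increase the lower limit,
\begin{equation*}
\liminf_{t\downarrow 0}\frac{f(x-tu)-f(x)+t\langle z,u\rangle}{t\|u\|}\ge\liminf_{y\to x}\frac{f(y)-f(x)-\langle z,y-x\rangle}{\|y-x\|}\ge 0.
\end{equation*}
Using $f(x)=0$ together with $f(x-tu)\le 0$, each quotient on the left equals $\frac{f(x-tu)}{t\|u\|}+\frac{\langle z,u\rangle}{\|u\|}$ and is therefore bounded above by $\langle z,u\rangle/\|u\|$; combining this with the displayed inequality gives $0\le\langle z,u\rangle/\|u\|$, hence $\langle z,u\rangle\ge 0$, as required.

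I expect the main obstacle to be choosing the correct one-sided direction: the Fr\'echet inequality produces a useful lower bound on $\langle z,u\rangle$ only when tested along $-u$, and this is precisely the side on which pseudoconvexity forces $f\le 0$. Pinning down the sign of $g'$ requires the global-minimizer reduction—mere quasiconvexity would not determine the sign of $g'$ on both sides of $0$—so identifying that pseudoconvexity (not just quasiconvexity) is the property doing the work is the conceptual crux; the remaining manipulations with the $\liminf$ are routine bookkeeping.
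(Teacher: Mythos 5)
Your proof is correct, and it differs from the paper's in one interesting way, namely in how the sign information along the ray is produced. Both arguments share the same skeleton: scalarize via \eqref{ScalarizationFormular} so that $z$ becomes a Fr\'echet subgradient at $x$ of $f(y)=\langle u,\nabla\varphi(y)\rangle$ with $f(x)=0$, establish that $f\le 0$ at points $x-tu$ with $t>0$ approaching $0$, and then test the defining $\liminf$ in \eqref{FrechetSubdifferential} along exactly that ray to squeeze out $\langle z,u\rangle\ge 0$ (the paper phrases this last step as a contradiction, you phrase it directly; that is cosmetic). The difference is in the middle step. The paper invokes pseudoconvexity only once, at the point $x$ itself: since $\langle\nabla\varphi(x),x_k-x\rangle=0$ for $x_k:=x-(1/k)u$, the definition forces $\varphi(x_k)\ge\varphi(x)$, and the classical mean value theorem then manufactures points $y_k=x-\theta_k u\to x$ with $f(y_k)\le 0$; the test sequence consists of these mean-value points only. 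You instead restrict to the line, check that $g(t)=\varphi(x+tu)$ inherits pseudoconvexity, deduce that the critical point $t=0$ is a global minimizer, and conclude $g'(s)\le 0$ for \emph{every} $s<0$ --- via the definitional implication when $g(s)>g(0)$ and Fermat's rule when $g(s)=g(0)$ --- so that $f(x-tu)\le 0$ on the whole ray. Your route avoids the mean value theorem at the cost of a small case analysis and of using pseudoconvexity at all points of the line rather than just at $x$; the paper's route is leaner in its use of the hypothesis (only the contrapositive of the definition at $x$), which dovetails with its companion result for quasiconvex functions (Theorem~\ref{NecessaryQC}), where the same mean-value machinery, in Mordukhovich form, still yields a one-sided conclusion along some selection. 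All the individual steps you give (the inherited pseudoconvexity of $g$, the global-minimizer reduction, the monotonicity of the $\liminf$ under restricting the approach, and the final bound) check out.
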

\begin{proof}
Suppose to the contrary that there exist	
$x,u\in\mathbb{R}^n$ and $z\in \widehat{\partial}^2 \varphi(x)(u)$  such that $\langle \nabla \varphi(x), u \rangle = 0$ and $\langle z,u\rangle<0$. By \eqref{ScalarizationFormular}, we have
$z\in\widehat{\partial}\langle u,\nabla\varphi\rangle(x)$ and so
\begin{equation}\label{NormalCone}
\begin{split}
0
&\leq \liminf_{y\rightarrow x}\frac{\langle u,\nabla\varphi(y)\rangle-\langle u,\nabla\varphi(x)\rangle-\langle z,y-x\rangle}{\|y-x\|}\\
&=\liminf_{y\rightarrow x}\frac{\langle u,\nabla\varphi(y)\rangle-\langle z,y-x\rangle}{\|y-x\|}.
\end{split}
\end{equation}
For the sequence $x_k:=x-(1/k)u\; (k\in\mathbb{N})$, one has $x_k\rightarrow x$ and 
$$
 \langle \nabla \varphi(x), x_k - x \rangle = \langle \nabla \varphi(x), -(1/k)u \rangle =0.
$$
The pseudoconvexity of $\varphi$ implies that $\varphi(x_k)\geq\varphi(x)$ and by the classical mean value theorem there exists $\theta_k\in (0,1/k)$ such that 
$$
0\leq\varphi(x_k)-\varphi(x)=\langle\nabla\varphi(x-\theta_ku),x_k-x\rangle=\langle\nabla\varphi(x-\theta_ku),(-1/k)u\rangle.
$$
For the sequence $y_k:=x-\theta_ku\; (k\in\mathbb{N})$, one has $y_k\rightarrow x$ and $\langle\nabla\varphi(y_k),u\rangle\leq 0$. 
Therefore, by \eqref{NormalCone} we have
\begin{equation*}
\begin{split}
0
&\leq\liminf_{k\rightarrow\infty}\frac{\langle u,\nabla\varphi(y_k)\rangle-\langle z,y_k-x\rangle}{\|y_k-x\|}\\
& \leq \liminf_{k\rightarrow\infty}\frac{\langle z,\theta_ku\rangle}{\|\theta_ku\|}\\
& =\frac{\langle z,u\rangle}{\|u\|}
\end{split}
\end{equation*}
which is contradict to $\langle z,u\rangle<0$.
$\hfill\Box$
\end{proof}

\medskip
The next example shows that \eqref{NormalCone_PC} is  violated if the pseudoconvexity is relaxed to quasiconvexity.
\begin{Example}\label{Signfunction}\rm 
Let $\varphi: \mathbb{R} \to \mathbb{R}$ be given by 
$$
\varphi(x) := \frac{1}{2} x^2 \text{sign}x,  \quad \forall x \in \mathbb{R}.
$$	
Observe that $\varphi$ is a quasiconvex $\mathcal{C}^{1,1}$-smooth function and $\nabla \varphi(x) = |x|$  for every $x \in \mathbb{R}$. Moreover, we have 
$$
\widehat{\partial}^2 \varphi(0)(u) = \begin{cases}
	[-u,u]& \text{if}\qquad u \geq 0,\\
	\emptyset & \text{if}\qquad u< 0.
\end{cases}
$$
Observe that for $z=-1, u=1$, we have $z \in \widehat{\partial}^2 \varphi(0)(u)$  and $\langle z,u \rangle < 0$.
\end{Example}

\section{Sufficient Conditions}
A second-order sufficient condition for the strict pseudoconvexity in the $\mathcal{C}^2$-smoothness case is recalled in the following theorem.
\begin{Theorem}\label{SufficientSPC} {\rm \cite[Proposition~4]{Crouzeix98}}
Let $\varphi: \mathbb{R}^n \to \mathbb{R}$ be a $\mathcal{C}^2$-smooth function satisfying 
\begin{equation}\label{Eq_SufficientSPC}
x\in \mathbb{R}^n, u \in \mathbb{R}^n \setminus \{0\}, \langle \nabla \varphi(x),u \rangle =0 \Longrightarrow \langle \nabla^2\varphi(x)u,u \rangle >0.
\end{equation}  
Then, $\varphi$ is a strictly pseudoconvex function. 
\end{Theorem}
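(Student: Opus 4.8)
The plan is to reduce the statement to the two-part characterization of strict pseudoconvexity provided by Theorem~\ref{QuasiPseudo}: I would show that the hypothesis \eqref{Eq_SufficientSPC} forces $\varphi$ to be quasiconvex and forces every critical point to be a strict local minimizer. Both parts hinge on restricting $\varphi$ to line segments and reading off one-dimensional second-order information, so the first move is to fix $x_1 \ne x_2$, set $u := x_2 - x_1 \neq 0$, and study the $\mathcal{C}^2$ function $h(t) := \varphi(x_1 + t u)$ of one real variable, whose derivatives are $h'(t) = \langle \nabla\varphi(x_1 + tu), u\rangle$ and $h''(t) = \langle \nabla^2\varphi(x_1 + tu) u, u\rangle$. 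The content of \eqref{Eq_SufficientSPC} is then exactly that $h'(t) = 0$ implies $h''(t) > 0$, i.e. every critical point of such a restriction is a strict local minimizer.

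For quasiconvexity I would argue by contradiction and invoke Lemma~\ref{Quasiconvex}. If $\varphi$ failed to be strictly quasiconvex, that lemma produces $x_1 \ne x_2$ and an interior point $t_0 \in (0,1)$ at which $h$ attains its maximum over $[0,1]$ and with $\langle \nabla\varphi(x_1 + t_0 u), u\rangle = 0$, i.e. $h'(t_0) = 0$. Since $t_0$ is an interior local maximizer of $h$, the second-order necessary condition for a maximum gives $h''(t_0) \le 0$, that is $\langle \nabla^2\varphi(\bar x) u, u\rangle \le 0$ with $\bar x := x_1 + t_0 u$, while $\langle \nabla\varphi(\bar x), u\rangle = 0$ and $u \neq 0$. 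This directly contradicts \eqref{Eq_SufficientSPC}. Hence $\varphi$ is strictly quasiconvex, and in particular quasiconvex.

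For the behaviour at critical points, suppose $\nabla\varphi(x) = 0$. Then $\langle\nabla\varphi(x), u\rangle = 0$ for every $u$, so \eqref{Eq_SufficientSPC} yields $\langle\nabla^2\varphi(x) u, u\rangle > 0$ for all $u \ne 0$; that is, $\nabla^2\varphi(x)$ is positive definite. By the classical second-order sufficient condition for optimality, $x$ is then a strict local minimizer of $\varphi$. Having verified conditions (i) and (ii) of Theorem~\ref{QuasiPseudo} in their strict form, I would conclude that $\varphi$ is strictly pseudoconvex.

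I expect the main obstacle to be the quasiconvexity step: the delicate point is to convert the failure of strict quasiconvexity into a genuine \emph{interior} maximizer of the one-dimensional restriction, which is precisely what Lemma~\ref{Quasiconvex} supplies, so that the second-order necessary condition for a maximum can be applied there. The positive definiteness at critical points and the final appeal to Theorem~\ref{QuasiPseudo} are then routine. As an alternative, self-contained route avoiding Theorem~\ref{QuasiPseudo}, one could argue directly from the definition of strict pseudoconvexity: assuming $x \ne y$, $\varphi(x) \ge \varphi(y)$ and $\langle\nabla\varphi(x), y - x\rangle \ge 0$, the restriction $h$ with $u = y - x$ has every critical point a strict local minimum, hence at most one critical point (an interior local maximum between two minima would itself be a forbidden critical point), forcing $h$ to be strictly monotone or strictly unimodal and yielding $h(1) > h(0)$, a contradiction.
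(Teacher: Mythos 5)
Your proposal is correct. Note that the paper itself offers no proof of this statement---it is quoted from Crouzeix \cite[Proposition~4]{Crouzeix98}---so the natural benchmark is the paper's proof of its $\mathcal{C}^{1,1}$ generalization, Theorem~\ref{SufficientPCC11}, and your argument follows exactly that skeleton: reduce via Theorem~\ref{QuasiPseudo} to (i) quasiconvexity and (ii) strict local minimality at critical points, prove (i) by contradiction through Lemma~\ref{Quasiconvex}, and prove (ii) from positive definiteness of the Hessian at a critical point. The only difference is that the $\mathcal{C}^2$ setting lets you replace the paper's heavier machinery---Proposition~\ref{SubfficientLocal} for step (ii), and the mean-value inequality plus local boundedness and robustness of the limiting subdifferential to extract a vector $z\in\partial^2\varphi(\bar x)(u)$ with $\langle z,u\rangle\le 0$---by the elementary one-dimensional facts $h''(t_0)\le 0$ at an interior maximizer and the classical second-order sufficiency condition; this is precisely the simplification the smoothness buys, and your use of it is sound. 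Two minor points: for Theorem~\ref{QuasiPseudo} you only need quasiconvexity, so establishing \emph{strict} quasiconvexity is harmless overkill (it falls out of Lemma~\ref{Quasiconvex} anyway); and your alternative self-contained route at the end is also valid, though the clean way to finish it is the interior-maximum argument you parenthetically mention (if $h(1)\le h(0)$ with $h$ increasing just right of $0$, the maximizer of $h$ on $[0,1]$ is interior, hence a critical point with $h''\le 0$, contradicting \eqref{Eq_SufficientSPC}), rather than the vaguer appeal to strict monotonicity or unimodality.
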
 

Our aim in this section is to establish some similar versions of Theorem~\ref{SufficientSPC} in the $\mathcal{C}^{1,1}$-smoothness case by using the Fr\'echet and Mordukhovich second-order subdifferentials.  The first version is the replacement of the Hessian matrices  in \eqref{Eq_SufficientSPC} by the Mordukhovich second-order subdifferentials.
Our proof is based on Theorem~\ref{QuasiPseudo} and the following sufficient optimality condition for $\mathcal{C}^{1,1}$-smooth functions.

\begin{Proposition} {\rm \cite[Corollary 4.8]{ChieuLeeYen17}} \label{SubfficientLocal}
	Suppose that $\varphi:\mathbb{R}^n\rightarrow\mathbb{R}$ is a $\mathcal{C}^{1,1}$-smooth function and $x\in\mathbb{R}^n$. If $\nabla\varphi(x)=0$ and 
	$$
	\langle z,u\rangle>0\;  \text{for all}\;   z\in \partial^2\varphi(x)(u), u\in\mathbb{R}^n
	$$ 
	then $x$ is a strict local minimizer of $\varphi$. 
\end{Proposition}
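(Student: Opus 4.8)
The plan is to argue by contradiction, combining the hypothesis with the same compactness–normalization scheme used in the proof of Theorem~\ref{NecessaryQC}. Suppose $x$ is not a strict local minimizer of $\varphi$. Then there is a sequence $y_k \to x$ with $y_k \ne x$ and $\varphi(y_k) \le \varphi(x)$. Set $t_k := \|y_k - x\| \to 0^+$ and $u_k := (y_k - x)/t_k$, so that $\|u_k\| = 1$; passing to a subsequence I may assume $u_k \to u$ with $\|u\| = 1$. The goal is to produce $z \in \partial^2\varphi(x)(u)$ with $\langle z, u\rangle \le 0$, which contradicts the positive-definiteness hypothesis (it forces $\langle z, u\rangle > 0$ because $u \ne 0$).

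First I would locate on each segment a point where the directional derivative of $\varphi$ is nonpositive. Since $\varphi \in \mathcal{C}^1$, the classical mean value theorem gives a point $c_k = x + \tau_k u_k$ with $\tau_k \in (0, t_k)$ and $\varphi(y_k) - \varphi(x) = \langle \nabla\varphi(c_k), y_k - x\rangle$; dividing by $t_k > 0$ and using $\varphi(y_k) \le \varphi(x)$ yields $\langle \nabla\varphi(c_k), u_k\rangle \le 0$, with $c_k \to x$.

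Next I would transfer this into information about the scalarized map $\psi(y) := \langle u, \nabla\varphi(y)\rangle$, whose subdifferential is $\partial\psi(y) = \partial^2\varphi(y)(u)$ by the scalarization formula \eqref{ScalarizationFormular}. Because $\nabla\varphi$ is locally Lipschitz with some constant $L$ near $x$ and $\nabla\varphi(x) = 0$, one has $\|\nabla\varphi(c_k)\| = \|\nabla\varphi(c_k) - \nabla\varphi(x)\| \le L\tau_k$, so replacing $u_k$ by its limit $u$ costs only $\langle \nabla\varphi(c_k), u\rangle \le \langle \nabla\varphi(c_k), u_k\rangle + L\tau_k\|u - u_k\| \le L\tau_k\|u - u_k\|$. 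Hence $\psi(c_k) \le L\tau_k\|u - u_k\|$ while $\psi(x) = \langle u, \nabla\varphi(x)\rangle = 0$. Applying Proposition~\ref{MeanValue} to the Lipschitz function $\psi$ on the segment $[c_k, x]$ produces $d_k \in [c_k, x)$ and $z_k \in \partial\psi(d_k) = \partial^2\varphi(d_k)(u)$ with $\langle z_k, x - c_k\rangle \ge \psi(x) - \psi(c_k)$; since $x - c_k = -\tau_k u_k$, this rearranges to $\langle z_k, u_k\rangle \le L\|u - u_k\|$.

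Finally I would pass to the limit. The points $d_k$ lie on $[c_k, x)$, hence $d_k \to x$, and by local boundedness of $\partial\psi$ near $x$ the sequence $(z_k)$ is bounded, so I may assume $z_k \to z$. Robustness (outer semicontinuity) of the Mordukhovich subdifferential then gives $z \in \partial\psi(x) = \partial^2\varphi(x)(u)$, and letting $k \to \infty$ in $\langle z_k, u_k\rangle \le L\|u - u_k\|$ yields $\langle z, u\rangle \le 0$, the desired contradiction. The main obstacle — the only place where genuine care is needed — is the mismatch between the varying directions $u_k$, along which nonpositivity of the directional derivative is obtained, and the fixed limiting direction $u$, at which the positive-definiteness hypothesis must be invoked. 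The Lipschitz estimate $\|\nabla\varphi(c_k)\| \le L\tau_k$ together with $u_k \to u$ is exactly what absorbs this mismatch into an $o(1)$ error. I would also note that the hypothesis is to be read for $u \ne 0$, since for $u = 0$ one has $\partial^2\varphi(x)(0) = \{0\}$ and $\langle 0, 0\rangle = 0 \not> 0$.
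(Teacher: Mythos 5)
Your proof is correct. Note first that the paper itself contains no proof of Proposition~\ref{SubfficientLocal}: it is imported verbatim from \cite[Corollary 4.8]{ChieuLeeYen17}, so there is no internal argument to compare against, and what you have produced is a self-contained proof assembled from the paper's own toolkit --- the mean value inequality of Proposition~\ref{MeanValue}, the scalarization formula \eqref{ScalarizationFormular}, and local boundedness plus robustness of the limiting subdifferential --- in exactly the style of the proofs of Theorems~\ref{NecessaryQC} and~\ref{SufficientPCC11}. Each step checks out: negation of strict local minimality gives $y_k\to x$, $y_k\neq x$, $\varphi(y_k)\le\varphi(x)$; the classical mean value theorem yields $\langle\nabla\varphi(c_k),u_k\rangle\le 0$ at $c_k=x+\tau_k u_k$; and the estimate $\|\nabla\varphi(c_k)\|=\|\nabla\varphi(c_k)-\nabla\varphi(x)\|\le L\tau_k$ (using $\nabla\varphi(x)=0$, which is where the criticality hypothesis enters) is precisely what converts this into $\psi(c_k)\le L\tau_k\|u-u_k\|$ for the \emph{fixed}-direction scalarization $\psi=\langle u,\nabla\varphi\rangle$. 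You correctly identify this as the only delicate point: Proposition~\ref{MeanValue} and \eqref{ScalarizationFormular} must be run with a fixed $u$, while the sign information arrives along the varying directions $u_k$; in the paper's related arguments (e.g., Theorem~\ref{SufficientPCC11}) the direction is fixed once and for all by Lemma~\ref{Quasiconvex}, so this mismatch never arises there, and absorbing it into the $L\tau_k\|u-u_k\|$ error term is the genuine content of your proof. The application of Proposition~\ref{MeanValue} on $[c_k,x]$, the rearrangement $\langle z_k,u_k\rangle\le L\|u-u_k\|$ (valid since $x-c_k=-\tau_k u_k$ with $\tau_k>0$), and the limit passage via boundedness of $(z_k)$ and outer semicontinuity of $\partial\psi$ giving $z\in\partial^2\varphi(x)(u)$ with $\langle z,u\rangle\le 0$ and $\|u\|=1$ are all sound. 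Your closing remark that the hypothesis must be read for $u\neq 0$ --- since $\partial^2\varphi(x)(0)=\{0\}$ makes the displayed condition literally unsatisfiable at $u=0$ --- is also apt, and is consistent with how the proposition is actually invoked in the proof of Theorem~\ref{SufficientPCC11}, where \eqref{SufficientPC} is imposed only for $u\neq 0$.
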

\begin{Theorem}\label{SufficientPCC11}   Let $\varphi:\mathbb{R}^n\rightarrow\mathbb{R}$ be a $\mathcal{C}^{1,1}$-smooth function satisfying
\begin{equation}\label{SufficientPC}
x \in \mathbb{R}^n, u \in \mathbb{R}^n \setminus \{0\}, \langle \nabla \varphi(x), u \rangle=0 \Longrightarrow \langle z, u \rangle >0 \; \text{for all} \; z \in \partial^2 \varphi(x)(u).
\end{equation}
Then $\varphi$ is a strictly pseudoconvex function. 
\end{Theorem}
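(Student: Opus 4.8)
The plan is to verify the two characterizing conditions of Theorem~\ref{QuasiPseudo} in its strict form: that $\varphi$ is quasiconvex, and that every critical point of $\varphi$ is a strict local minimizer. The second condition is immediate. Indeed, if $\nabla\varphi(x)=0$, then every $u\in\mathbb{R}^n\setminus\{0\}$ satisfies $\langle\nabla\varphi(x),u\rangle=0$, so hypothesis \eqref{SufficientPC} yields $\langle z,u\rangle>0$ for all $z\in\partial^2\varphi(x)(u)$ and all $u\neq 0$. This is exactly the assumption of Proposition~\ref{SubfficientLocal}, whence $x$ is a strict local minimizer of $\varphi$.

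The bulk of the work is the first condition, and I would in fact prove the stronger statement that $\varphi$ is strictly quasiconvex (which implies quasiconvexity). Arguing by contradiction, suppose $\varphi$ is not strictly quasiconvex. Lemma~\ref{Quasiconvex} then produces $x_1\neq x_2$ and $t_0\in(0,1)$; writing $u:=x_2-x_1\neq 0$, $c:=x_1+t_0u$ and $f(t):=\varphi(x_1+tu)$, we have $\langle\nabla\varphi(c),u\rangle=0$ together with the maximality relation $f(t)\leq f(t_0)$ for all $t\in[0,1]$ from \eqref{LocalMax}. Applying \eqref{SufficientPC} at the pair $(c,u)$ gives $\langle z,u\rangle>0$ for every $z\in\partial^2\varphi(c)(u)$. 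I would then introduce the locally Lipschitz scalar field $h(y):=\langle u,\nabla\varphi(y)\rangle$, so that the scalarization formula \eqref{ScalarizationFormular} gives $\partial h(c)=\partial^2\varphi(c)(u)$ and $f'(t)=h(x_1+tu)$; in particular $h(c)=f'(t_0)=0$.

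The heart of the argument is to show that the maximality of $t_0$ is incompatible with the positivity just established, by proving $f'(t)>0$ for all $t$ slightly larger than $t_0$. For $s>0$, I would apply the mean value inequality (Proposition~\ref{MeanValue}) to $h$ on the segment between $c+su$ and $c$: this yields a point $c'=c+\mu u$ with $\mu\in(0,s]$ and a subgradient $x^*\in\partial h(c')$ satisfying
\begin{equation*}
f'(t_0+s)=h(c+su)-h(c)\geq s\langle x^*,u\rangle .
\end{equation*}
If $f'$ failed to be positive on every right neighborhood of $t_0$, one could extract $s_k\downarrow 0$ with $f'(t_0+s_k)\leq 0$ and corresponding subgradients $x^*_k\in\partial h(c+\mu_k u)$ satisfying $\langle x^*_k,u\rangle\leq 0$. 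Since $h$ is locally Lipschitz, $\partial h$ is locally bounded and has closed graph (the robustness already invoked in the proof of Theorem~\ref{NecessaryQC}); as $c+\mu_k u\to c$, a subsequence of $(x^*_k)$ converges to some $z\in\partial h(c)=\partial^2\varphi(c)(u)$ with $\langle z,u\rangle\leq 0$, contradicting \eqref{SufficientPC}. Hence $f'>0$ on some interval $(t_0,t_0+\varepsilon)$, so $f$ is strictly increasing there; choosing $t\in(t_0,\min\{t_0+\varepsilon,1\})$ gives $f(t)>f(t_0)$, contradicting \eqref{LocalMax}. Thus $\varphi$ is strictly quasiconvex, in particular quasiconvex, and Theorem~\ref{QuasiPseudo} then yields strict pseudoconvexity.

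The step I expect to be the main obstacle is precisely this last transfer: converting the pointwise positivity of $\langle\cdot,u\rangle$ over the set $\partial^2\varphi(c)(u)$ into a strict sign of the one-dimensional derivative $f'$ to the right of $t_0$. The delicate point is that the hypothesis controls the second-order subdifferential only at the single point $c$, whereas the increment of $f'$ must be estimated through subgradients at nearby points; the mean value inequality supplies those subgradients, and the local boundedness together with the closed-graph property of $\partial h$ allow a safe passage to the limit, exactly in the spirit of Theorem~\ref{NecessaryQC}.
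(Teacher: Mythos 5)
Your proposal is correct and follows essentially the same route as the paper's proof: both reduce strict pseudoconvexity to quasiconvexity via Theorem~\ref{QuasiPseudo} and Proposition~\ref{SubfficientLocal}, then argue by contradiction from Lemma~\ref{Quasiconvex}, applying the mean value inequality of Proposition~\ref{MeanValue} to the scalarized function $\langle u,\nabla\varphi\rangle$ near the maximizer and passing to the limit using local boundedness and robustness of the Mordukhovich subdifferential. The only (immaterial) difference is how the nearby points with $f'(t_0+s)\le 0$ are produced: you negate the positivity of $f'$ on right neighborhoods of $t_0$, whereas the paper extracts such points from the classical mean value theorem applied to $\varphi$ together with \eqref{LocalMax}.
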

\begin{proof}  Observe that if $\nabla\varphi(x)=0$, then \eqref{SufficientPC} implies the positive semidefiniteness of $\partial^2\varphi(x)$  and so, by Proposition~\ref{SubfficientLocal}, $x$ is a strict local minimizer of $\varphi$. Hence, it follows from Theorem~\ref{QuasiPseudo} that $\varphi$ is strictly pseudoconvex if and only if $\varphi$ is quasiconvex. 
	
	Assume that $\varphi$ is not quasiconvex. Then, by Lemma~\ref{Quasiconvex}, there exist $x_1,x_2\in\mathbb{R}^n, x_1\ne x_2$ and $t_0\in(0,1)$ such that $\langle\nabla\varphi(x_1+t_0(x_2-x_1)),x_2-x_1\rangle=0$  and \eqref{LocalMax} is satisfied. Let $\bar{x}:=x_1+t_0(x_2-x_1)$ and $u:=x_2-x_1$. It follows that $u\ne 0$ and $\langle\nabla\varphi(\bar{x}),u\rangle=0$ and so, by \eqref{SufficientPC}, 
\begin{equation}\label{PositiveDefinite}
\langle z,u\rangle>0, \quad \forall z\in\partial^2\varphi(\bar{x})(u).
\end{equation}
For the sequence $x_k:=\bar{x}+(1/k)u$ ($k\in\mathbb{N}$) we have $x_k\rightarrow\bar{x}$. For sufficiently large k, we have $t_0+1/k\in(0,1)$ and so $\varphi(x_k)\leq\varphi(\bar{x})$ by \eqref{LocalMax}. Applying the classical mean value theorem, for sufficiently large $k$, there exists $\theta_k\in (0, 1/k)$ such that
\begin{equation}\label{term}
\langle\nabla\varphi(\bar{x}+\theta_ku),(1/k)u\rangle=\varphi(x_k)-\varphi(\bar{x})\leq 0.
\end{equation}
Consider the function $\phi:\mathbb{R}^n\rightarrow\mathbb{R}$ given by
$$
\phi(x)=\langle u,\nabla\varphi\rangle(x), \quad\forall x\in\mathbb{R}^n.
$$
 Applying Proposition~\ref{MeanValue}, for every $k$, there exist $\gamma_k\in(0,\theta_k]$ and $z_k\in\partial\phi(\bar{x}+\gamma_ku)$ such that
\begin{eqnarray*}
\langle z_k,-\theta_ku\rangle&\geq&\phi(\bar{x})-\phi(\bar{x}+\theta_ku)\\
&=&\langle u,\nabla\varphi(\bar{x})\rangle-\langle u,\nabla\varphi(\bar{x}+\theta_ku)\rangle\\
&=&-\langle u,\nabla\varphi(\bar{x}+\theta_ku)\rangle.
\end{eqnarray*}
Combining the above inequality with \eqref{term} we have $\langle z_k,u\rangle\leq 0$
for sufficiently large $k$. Since $\partial\phi$ is locally bounded at $\bar{x}$, the sequence $(z_k)$ is bounded. Without loss of generality, we can assume that $z_k\rightarrow z$. It follows that $\langle z,u\rangle\leq 0$ and by the robustness of $\partial\phi$ we have $z\in\partial\phi(\bar{x})=\partial^2\varphi(\bar{x})(u)$. This is contradict \eqref{PositiveDefinite}.
The proof is complete.
$\hfill\Box$
\end{proof}

\medskip
We consider two examples to analyze \eqref{SufficientPC}. The first one shows that \eqref{SufficientPC} cannot be relaxed to the following condition
\begin{equation}\label{Counter1}
x \in \mathbb{R}^n, u \in \mathbb{R}^n \setminus \{0\}, \langle \nabla \varphi(x), u \rangle=0 \Longrightarrow \langle z, u \rangle >0 \; \text{for some} \; z \in \partial^2 \varphi(x)(u)
\end{equation}
Moreover, \eqref{Counter1} is not sufficient for the quasiconvexity of $\varphi$.
\begin{Example}\rm
	Let $\varphi: \mathbb{R} \to \mathbb{R}$ be the function given by
	$$
	\varphi(x):= \int_0^{x} {\phi (t)dt},
	$$
	where
	$$
	\phi(t) = \begin{cases}
	-2t^2 + t^2 \sin(\frac{1}{t})& \text{if}\qquad t > 0,\\
	0 & \text{if}\qquad t= 0,\\
	2t^2 + t^2 \sin(\frac{1}{t}) & \text{if}\qquad t < 0.\\
	\end{cases}
	$$
\end{Example}
Observe that $\varphi$ is a $\mathcal{C}^{1,1}$-smooth function and $\nabla \varphi(x) = \phi(x)$ for every $x \in \mathbb{R}$.
Moreover,  we have $\partial^2 \varphi(0)(u) = \left[-|u|,|u|\right]$ for all $u \in \mathbb{R}$. Let $x \in \mathbb{R}$, $u \in \mathbb{R} \setminus \{0\}$ be such that $\langle \nabla \varphi(x), u \rangle = 0$. It follows that $\nabla \varphi(x) = 0$, or equivalently $x = 0$.  For   $z^* = u \in \partial^2 \varphi(0)(u) $, we have $\langle z^*, u \rangle = |u|^2 > 0$. The condition \eqref{Counter1} holds for $\varphi$. Howerver, $\varphi$ is not quasiconvex. Indeed, for $x =\displaystyle\frac{1}{\pi}, y =-\frac{1}{\pi} $, we have 
$$
\langle \nabla \varphi(x), y -x \rangle= \frac{4}{\pi^3} >0,\quad  \langle \nabla \varphi(y), y -x \rangle = -\frac{4}{\pi^3}<0.
$$
By \cite[Proposition~1]{Crouzeix98}, $\varphi$ is not quasiconvex.   

\medskip
The second example points out that we cannot replace the Mordukhovich second-order subdifferential in \eqref{SufficientPC} by the Fr\'echet second-order one since it may be empty.
\begin{Example} {\rm Let $\varphi: \mathbb{R} \to \mathbb{R}$ be the function given by 
		$$
		\varphi(x):= \int_0^{x} {\phi (t)dt} \quad \forall x \in \mathbb{R},
		$$
		where
		$$
		\phi(t) := \begin{cases}
		-2t - t\sin(\log(|t|))& \text{if}\qquad t \ne 0,\\
		0 & \text{if}\qquad t = 0
		\end{cases}
		$$
		is a Lipschitz continuous function.
		Hence, $\varphi$ is $\mathcal{C}^{1,1}$-smooth and $\nabla\varphi(x)=\phi(x)$ for every $x\in\mathbb{R}$. Let $x, u \in \mathbb{R}$, $u \ne 0$ such that $\langle\nabla \varphi(x),u\rangle=0$. Then, $\nabla\varphi(x) = 0$ and so $x = 0$. 
		We have $\widehat{\partial}^2 \varphi(0)(u) = \emptyset$. Thus, the below condition holds
		$$
		x \in \mathbb{R}, u \in \mathbb{R}\setminus \{0\}, \langle\nabla\varphi(x),u\rangle = 0 \Longrightarrow \langle z,u\rangle >0\; \text{for all} \; z \in \widehat{\partial}^2\varphi(x)(u). 
		$$
		However, $\varphi$ is not a pseudoconvex function. Indeed, for $x = 0, y = 1$, we have 
		$$
		\langle\nabla \varphi(x), y-x\rangle =0, \; \langle\nabla \varphi(y), y-x\rangle =-2 <0.
		$$
		By \cite[Proposition 2]{Crouzeix98}, $\varphi$ is not pseudoconvex. 
	}
\end{Example}

\medskip
When the Fr\'echet second-order subdifferential is nonempty, we can use it to characterize the strict quasiconvexity and strict pseudoconvexity of $\mathcal{C}^{1,1}$-smooth functions.

\begin{Theorem}\label{SufficientQC1}
	Let $\varphi:\mathbb{R}^n\rightarrow\mathbb{R}$ be a $\mathcal{C}^{1,1}$-smooth function satisfying
	\begin{equation} \label{SufQC}
	x \in \mathbb{R}^n, u \in \mathbb{R}^n \setminus \{0\}, \langle \nabla \varphi(x), u \rangle=0 \Longrightarrow  \langle z, u \rangle >0 \; \text{for some}\; z \in \widehat{\partial}^2\varphi(x)(u) \cup -\widehat{\partial}^2\varphi(x)(-u)
	\end{equation}
	Then $\varphi$ is a strictly quasiconvex function. 
\end{Theorem}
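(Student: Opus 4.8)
The plan is to establish the contrapositive through Lemma~\ref{Quasiconvex}, mirroring the argument of Theorem~\ref{SufficientPCC11} but exploiting the Fr\'echet second-order subdifferential directly through its defining $\liminf$ rather than via a mean value inequality for the subdifferential. Suppose $\varphi$ is not strictly quasiconvex. Then Lemma~\ref{Quasiconvex} produces $x_1\ne x_2$ and $t_0\in(0,1)$ with $\langle\nabla\varphi(x_1+t_0(x_2-x_1)),x_2-x_1\rangle=0$ and $\varphi(x_1+t(x_2-x_1))\le\varphi(x_1+t_0(x_2-x_1))$ for all $t\in[0,1]$. Setting $\bar{x}:=x_1+t_0(x_2-x_1)$ and $u:=x_2-x_1\ne 0$, we get $\langle\nabla\varphi(\bar{x}),u\rangle=0$, so hypothesis \eqref{SufQC} yields some $z$ with $\langle z,u\rangle>0$ lying in $\widehat{\partial}^2\varphi(\bar{x})(u)$ or in $-\widehat{\partial}^2\varphi(\bar{x})(-u)$. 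The goal in each case is to contradict $\langle z,u\rangle>0$.

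First I would treat the case $z\in\widehat{\partial}^2\varphi(\bar{x})(u)$. By the scalarization formula \eqref{ScalarizationFormular}, $z\in\widehat{\partial}\langle u,\nabla\varphi\rangle(\bar{x})$, which unravels (using $\langle u,\nabla\varphi(\bar{x})\rangle=0$) to $\liminf_{y\to\bar{x}}\big(\langle u,\nabla\varphi(y)\rangle-\langle z,y-\bar{x}\rangle\big)/\|y-\bar{x}\|\ge 0$; in particular this quantity is nonnegative along any sequence $y_k\to\bar{x}$. To build a useful sequence I approach $\bar{x}$ from the right: for large $k$ the point $\bar{x}+(1/k)u$ corresponds to a parameter $t_0+1/k\in(0,1)$, so the local-maximum inequality gives $\varphi(\bar{x}+(1/k)u)\le\varphi(\bar{x})$, and the classical mean value theorem furnishes $\theta_k\in(0,1/k)$ with $\langle\nabla\varphi(\bar{x}+\theta_k u),u\rangle\le 0$. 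Inserting $y_k:=\bar{x}+\theta_k u\to\bar{x}$ into the $\liminf$ and using $\langle u,\nabla\varphi(y_k)\rangle\le 0$ forces $0\le-\langle z,u\rangle/\|u\|$, contradicting $\langle z,u\rangle>0$.

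The complementary case $z\in-\widehat{\partial}^2\varphi(\bar{x})(-u)$ runs symmetrically, the sign reversals being exactly the reason for the union in \eqref{SufQC}. Here $-z\in\widehat{\partial}^2\varphi(\bar{x})(-u)=\widehat{\partial}\langle -u,\nabla\varphi\rangle(\bar{x})$ gives $\liminf_{y\to\bar{x}}\big({-\langle u,\nabla\varphi(y)\rangle}+\langle z,y-\bar{x}\rangle\big)/\|y-\bar{x}\|\ge 0$, so now I approach $\bar{x}$ from the left via $\bar{x}-(1/k)u$ (parameter $t_0-1/k\in(0,1)$ for large $k$); the local maximum and the mean value theorem produce $\theta_k\in(0,1/k)$ with $\langle\nabla\varphi(\bar{x}-\theta_k u),u\rangle\ge 0$, and feeding $y_k:=\bar{x}-\theta_k u$ into the $\liminf$ again yields $0\le-\langle z,u\rangle/\|u\|$, the desired contradiction. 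Since both cases are impossible, $\varphi$ must be strictly quasiconvex.

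I expect the only real subtlety to be the sign bookkeeping: one must approach $\bar{x}$ from the side on which the directional derivative $\langle u,\nabla\varphi(\cdot)\rangle$ carries the sign that makes the extra term in the Fr\'echet difference quotient nonpositive, and this is precisely why a single direction does not suffice and the union $\widehat{\partial}^2\varphi(\bar{x})(u)\cup-\widehat{\partial}^2\varphi(\bar{x})(-u)$ is needed. In contrast with Theorem~\ref{SufficientPCC11}, no robustness or local boundedness argument enters, since the Fr\'echet subdifferential is used directly at $\bar{x}$ through its limiting definition rather than as a limit of subgradients at nearby points.
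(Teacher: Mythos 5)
Your proof is correct and takes essentially the same route as the paper's: contrapositive via Lemma~\ref{Quasiconvex}, scalarization through \eqref{ScalarizationFormular}, and testing the defining $\liminf$ of the Fr\'echet subgradient along mean-value points $\bar{x}\pm\theta_k u$ obtained from the local-maximum inequality \eqref{LocalMax}. The only cosmetic difference is that you write out the second case explicitly by approaching $\bar{x}$ from the left, whereas the paper dispatches it by substituting $-u,-z$ into Case~1 --- which amounts to exactly the same computation.
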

\begin{proof}
	Assume that $\varphi$ is not strictly quasiconvex. 	Then, by Lemma~\ref{Quasiconvex}, there exist $x_1,x_2\in\mathbb{R}^n$ with $x_1\ne x_2$ and $t_0\in(0,1)$ such that $\langle\nabla\varphi(x_1+t_0(x_2-x_1)),x_2-x_1\rangle=0$  and \eqref{LocalMax} is satisfied. Let $x:=x_1+t_0(x_2-x_1)$ and $u:=x_2-x_1$. It follows that $u\ne 0$ and $\langle\nabla\varphi(x),u\rangle=0$ and so, by \eqref{SufQC}, there exists $z\in \widehat{\partial}^2\varphi(x)(u) \cup -\widehat{\partial}^2\varphi(x)(-u)$ such that
	$\langle z,u\rangle>0$. Since 
	$$
	\widehat{\partial}^2\varphi(x)(u) \cup -\widehat{\partial}^2\varphi(x)(-u) = \widehat{\partial}\langle u,\nabla\varphi\rangle(x) \cup -\widehat{\partial}\langle -u,\nabla\varphi\rangle(x)
	$$ 
	it must happen one of the following cases.
	
	\noindent\medskip
	\textit{Case 1:} $z\in \widehat{\partial}\langle u,\nabla\varphi\rangle(x)$. 
	Since $\langle\nabla\varphi(x),u\rangle=0$, we have
\begin{equation}\label{split}
\begin{split}
0
&\leq \liminf_{y\rightarrow x}\frac{\langle u,\nabla\varphi(y)\rangle-\langle u,\nabla\varphi(x)\rangle-\langle z,y-x\rangle}{\|y-x\|}\\
&=\liminf_{y\rightarrow x}\frac{\langle u,\nabla\varphi(y)\rangle-\langle z,y-x\rangle}{\|y-x\|}.
\end{split}
\end{equation}	 
	For the sequence $x_k:=x+(1/k)u$ ($k\in\mathbb{N}$) we have $x_k\rightarrow x$. For sufficiently large k, we have $t_0+1/k\in(0,1)$ and so $\varphi(x_k)\leq \varphi(x)$ by \eqref{LocalMax}. Applying the classical mean value theorem, for sufficiently large $k$, there exists $\theta_k\in (0, 1/k)$ such that
	\begin{equation}\label{term1}
	\langle\nabla\varphi(x+\theta_ku),(1/k)u\rangle=\varphi(x_k)-\varphi(x)\leq 0.
	\end{equation}
	For the sequence $y_k:=x+\theta_ku\; (k\in\mathbb{N})$ we have $y_k\rightarrow x$ and $\langle\nabla\varphi(y_k),u\rangle\leq 0$ by \eqref{term1} for every $k\in\mathbb{N}$. It follows from \eqref{split} that
	\begin{equation*}\label{split2}
	\begin{split}
	0
	&\leq\liminf_{k\rightarrow\infty}\frac{\langle u,\nabla\varphi(y_k)\rangle-\langle z,y_k-x\rangle}{\|y_k-x\|}\\
	& \leq \liminf_{k\rightarrow\infty}\frac{-\langle z,\theta_ku\rangle}{\|\theta_ku\|}\\
	& =\frac{-\langle z,u\rangle}{\|u\|}
	\end{split}
	\end{equation*}
	which is contradict to $\langle z,u\rangle> 0$.	
	
	\noindent\medskip
	\textit{Case 2.} $z\in -\widehat{\partial}\langle -u,\nabla\varphi\rangle(x)$. 
	Repeating the proof of Case 1. with $u,z$ being replaced by $-u,-z$ we also get a contradiction. 
	$\hfill\Box$
\end{proof}
\begin{Remark}
	{\rm Observe that the strict quasiconvexity in Theorem~\ref{SufficientQC1} cannot be improved to strict pseudoconvexity.  Indeed, let $\varphi$ be the function given in Example \ref{Signfunction}.
		We have
		$$
		\widehat{\partial}^2\varphi(0)(u)\cup -\widehat{\partial}^2\varphi(0)(-u) =\left[-|u|,|u|\right],
		$$
		for all $u \in \mathbb{R}$. Observe that if $x \in \mathbb{R}$, $u\in \mathbb{R}\setminus\{0\}$ such that $\langle \nabla \varphi(x), u \rangle = 0$ then $x = 0$. Hence, with $z:= u \in \widehat{\partial}^2\varphi(0)(u)\cup -\widehat{\partial}^2\varphi(0)(-u)$ we have $\langle z, u \rangle = |u|^2 > 0$ and so \eqref{SufQC} holds  while $\varphi$ is not  strictly pseudoconvex. }
\end{Remark}

We now improve \eqref{SufQC} to get another characterization for the strict pseudoconvexity.
\begin{Theorem}
	Let $\varphi: \mathbb{R}^n \to \mathbb{R}$ be a $\mathcal{C}^{1,1}$-smooth function satisfying 
	\begin{equation}\label{Pseudoexists}
	x \in \mathbb{R}^n, u \in \mathbb{R}^n \setminus \{0\}, \langle \nabla \varphi(x), u\rangle =0 \Longrightarrow \langle z,u \rangle >0 \; \text{for some} \; z \in \widehat{\partial}^2\varphi(x)(u). 
	\end{equation}
	Then $\varphi$ is a strictly pseudoconvex function. 
\end{Theorem}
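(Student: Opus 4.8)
The plan is to reduce the statement to the characterization of strict pseudoconvexity given in Theorem~\ref{QuasiPseudo}. Since a $\mathcal{C}^{1,1}$-smooth function is in particular continuously differentiable, it suffices to verify the two conditions there: (i) $\varphi$ is quasiconvex, and (ii) every critical point of $\varphi$ is a strict local minimizer. Step~(i) is essentially free: whenever $u \neq 0$ and $\langle\nabla\varphi(x),u\rangle = 0$, the vector $z\in\widehat{\partial}^2\varphi(x)(u)$ supplied by \eqref{Pseudoexists} automatically lies in $\widehat{\partial}^2\varphi(x)(u)\cup -\widehat{\partial}^2\varphi(x)(-u)$, so condition \eqref{SufQC} holds and Theorem~\ref{SufficientQC1} gives that $\varphi$ is strictly quasiconvex, hence quasiconvex.

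The heart of the proof is step~(ii). I would fix $x$ with $\nabla\varphi(x)=0$ and show it is in fact a strict (global, hence local) minimizer. Arguing by contradiction, suppose there is $y\neq x$ with $\varphi(y)\le\varphi(x)$, and set $u:=(y-x)/\|y-x\|$. Because $\nabla\varphi(x)=0$ makes every direction admissible in \eqref{Pseudoexists}, there is $z\in\widehat{\partial}^2\varphi(x)(u)$ with $\langle z,u\rangle>0$. By the scalarization formula \eqref{ScalarizationFormular} this means $z\in\widehat{\partial}\langle u,\nabla\varphi\rangle(x)$; unwinding the defining $\liminf$ of the Fr\'echet subdifferential along the ray $t\mapsto x+tu$ and using $\nabla\varphi(x)=0$ yields $\langle\nabla\varphi(x+tu),u\rangle>0$ for all sufficiently small $t>0$. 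Consequently the scalar function $t\mapsto\varphi(x+tu)$ has positive derivative and is strictly increasing near $0$, so $\varphi(x+tu)>\varphi(x)$ for small $t>0$.

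On the other hand, quasiconvexity forces the opposite inequality along the same ray: for every $t\in(0,\|y-x\|)$ the point $x+tu$ lies on the open segment joining $x$ and $y$, and since $\varphi(y)\le\varphi(x)$ we get $\varphi(x+tu)\le\varphi(x)$. Choosing $t$ positive and smaller than both thresholds produces a contradiction, so no such $y$ exists and $x$ is a strict minimizer. Having established (i) and (ii), Theorem~\ref{QuasiPseudo} delivers strict pseudoconvexity of $\varphi$.

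The main obstacle, and the point worth emphasizing, is step~(ii): the second-order condition \eqref{Pseudoexists} only produces growth along one ray at a time, and the size of the interval on which $\varphi(x+tu)>\varphi(x)$ may shrink as the direction $u$ varies, so any attempt to read off a full neighborhood of strict minimality by quantifying over all directions is liable to fail for lack of uniformity. The device that bypasses this is to test growth only along the single ray aimed at the offending point $y$; there the directional increase coming from \eqref{Pseudoexists} collides directly with the decrease forced by quasiconvexity, and no control uniform in the direction is ever required.
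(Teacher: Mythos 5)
Your proof is correct, but your treatment of the critical-point condition (ii) takes a genuinely different route from the paper's. The paper applies \eqref{Pseudoexists} at a critical point $x$ to both $u$ and $-u$, so that $\widehat{\partial}\langle u,\nabla\varphi\rangle(x)$ and $\widehat{\partial}\langle -u,\nabla\varphi\rangle(x)$ are both nonempty; by \cite[Proposition~1.87]{Mordukhovich06} this forces $\langle u,\nabla\varphi\rangle$ to be differentiable at $x$ for every $u$, hence $\varphi$ is twice differentiable there with positive definite Hessian $\nabla^2\varphi(x)$, and \cite[Theorem~13.2]{Rockafellar98} supplies the quadratic expansion showing $x$ is a strict local minimizer. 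You bypass that machinery entirely: unwinding the Fr\'echet subdifferential $\liminf$ along the single ray aimed at a putative point $y\ne x$ with $\varphi(y)\le\varphi(x)$ gives $\liminf_{t\downarrow 0}\, t^{-1}\langle\nabla\varphi(x+tu),u\rangle\ge\langle z,u\rangle>0$ (your restriction of the $\liminf$ to the ray goes in the right direction, since a $\liminf$ over a subfamily can only increase), hence $t\mapsto\varphi(x+tu)$ strictly increases near $0$, which collides with the bound $\varphi(x+tu)\le\max\{\varphi(x),\varphi(y)\}=\varphi(x)$ coming from the quasiconvexity you already secured in step (i) via Theorem~\ref{SufficientQC1} --- that first step coincides with the paper's. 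Your argument is more elementary and self-contained (only the definition of the Fr\'echet subdifferential, scalarization \eqref{ScalarizationFormular}, and the classical mean value theorem), and it actually shows critical points are strict \emph{global} minimizers; the paper's route, at the price of two cited external results, yields a structural byproduct recorded in its subsequent Remark, namely that \eqref{Pseudoexists} forces $\varphi$ to be twice differentiable at every critical point --- information your ray argument does not recover. Note only that your step (ii) uses the quasiconvexity established in step (i); since Theorem~\ref{QuasiPseudo} merely requires both conditions to hold simultaneously, this dependence is harmless.
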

\begin{proof} By Theorem \ref{SufficientQC1}, $\varphi$ is strictly quasiconvex. 
	We will use Theorem~\ref{QuasiPseudo} to prove the strict pseudoconvexity of $\varphi$.
	Let $x \in \mathbb{R}^n$ such that $\nabla \varphi(x)=0$. It follows from \eqref{ScalarizationFormular} and \eqref{Pseudoexists}
	that 
	$$
	\widehat\partial\langle u,\nabla\varphi\rangle(x)=\widehat{\partial}^2 \varphi(x)(u)\ne \emptyset\quad\text{and}\quad 
	\widehat\partial\langle -u,\nabla\varphi\rangle(x)=\widehat{\partial}^2 \varphi(x)(-u)\ne \emptyset
	$$
	for every $u\in\mathbb{R}^n\setminus\{0\}$. 
	By \cite[Proposition~1.87]{Mordukhovich06}, the scalar function $\langle u, \nabla \varphi\rangle $ is differentiable at $x$  for every $u\in\mathbb{R}^n\setminus\{0\}$. Hence, $\varphi$ is twice differentiable at $x$ and 
	$$
	\widehat{\partial}^2\varphi(x)(u)=\{\nabla\langle u, \nabla \varphi\rangle (x)\}=\{\nabla^2\varphi(x)u\}
	$$
	 for every $u\in\mathbb{R}^n\setminus\{0\}$. Again, by \eqref{Pseudoexists}, its Hessian $\nabla^2\varphi(x)$ is positive definite. Moreover,
	by \cite[Theorem~13.2]{Rockafellar98}, the Hessian matrix  $\nabla^2\varphi(x)$ also furnishes a quadratic expansion for $\varphi$ at $x$.  Therefore, the positive definiteness of $\nabla^2\varphi(x)$ and the vanishing of $\nabla\varphi(x)$ yield that 
	 $x$ is a strict local minimizer of $\varphi$. By Theorem~\ref{QuasiPseudo}, $\varphi$ is strictly pseudoconvex. 
$\hfill\Box$
\end{proof} 
\begin{Remark}  {\rm The condition \eqref{Pseudoexists} implies that $\varphi$ is twice differentiable at every its critical point.}
\end{Remark}

\medskip
In the two next examples, we will show that \eqref{Pseudoexists} and \eqref{SufficientPC} are incomparable.
\begin{Example} {\rm 
		Let $\varphi: \mathbb{R} \to \mathbb{\mathbb{R}}$ be the function defined by
		$$
		\varphi(x) = \begin{cases}
		\frac{1}{2}x^2 & \text{if}\qquad x \leq 0,\\
		3x^2 & \text{if}\qquad x> 0.
		\end{cases}
		$$
		Then,   $\varphi$ is $\mathcal{C}^{1,1}$-smooth  and
		$$
		\nabla \varphi(x) = \begin{cases}
		x & \text{if}\qquad x \leq 0,\\
		6x & \text{if}\qquad x> 0.
		\end{cases}
		$$
		Let $x, u \in \mathbb{R}$, $u \ne 0$ such that $\langle \nabla \varphi(x),u \rangle =0$. Then, $\nabla \varphi(x) =0$ and so $x = 0$. Clearly,
		$$
		\widehat{\partial}^2 \varphi(0)(u) = \begin{cases}
		[u,6u] & \text{if}\qquad u \geq 0,\\
		\emptyset & \text{if}\qquad u< 0,
		\end{cases}
		\quad \text{and} \quad
		{\partial}^2 \varphi(0)(u) = \begin{cases}
		[u,6u] & \text{if}\qquad u \geq 0,\\
		\{u,6u\} & \text{if}\qquad u< 0.
		\end{cases}
		$$
		Hence,  \eqref{SufficientPC}  holds while \eqref{Pseudoexists} is not satisfied. 
	}
\end{Example}
\begin{Example} {\rm 
		Let $\varphi: \mathbb{R} \to \mathbb{R}$ be the function defined by
		$$
		\varphi(x):= \int_0^{x} {\phi (t)dt},
		$$
		where $\phi: \mathbb{R} \to \mathbb{R}$ is given by
		$$
		\phi(t) = \begin{cases}
		\displaystyle\frac{1}{2\pi} & \text{if} \qquad \displaystyle t\geq \frac{1}{\pi},\\
		\displaystyle\frac{t}{2} + t^2 \sin \left(\frac{1}{t} \right)& \text{if}\qquad \displaystyle 0<|t|<\frac{1}{\pi},\\
		0 & \text{if}\qquad t=0.\\
		\displaystyle-\frac{1}{2\pi} & \text{if} \qquad \displaystyle t\leq -\frac{1}{\pi}.
		\end{cases}
		$$
		Since $\phi$ is locally Lipschitz, $\varphi$ is  $\mathcal{C}^{1,1}$-smooth and $\nabla\varphi(x)=\phi(x)$ for every $x\in\mathbb{R}$. 
		Moreover, $\varphi$ is twice differentiable everywhere except the  points $\displaystyle \frac{1}{\pi}$ and $\displaystyle -\frac{1}{\pi}$.
		 Let $x, u \in \mathbb{R}^n$, $u \ne 0$ such that $\langle \nabla \varphi(x),u \rangle =0$. Then $\nabla \varphi(x)=0$, and so $x = 0$ since $\frac{1}{2} + x\sin(\frac{1}{x}) > 0$ for all $x \ne 0$.  Clearly,
		$$
		\widehat{\partial}^2 \varphi(0)(u) = \left\{ {\frac{1}{2}u} \right\}
		\quad \text{and} \quad
		{\partial}^2 \varphi(0)(u) =
		\begin{cases}
		\left[-\frac{1}{2}u,\frac{3}{2}u\right] &\text{if}\quad u\geq 0,\\
		\left[\frac{3}{2}u,-\frac{1}{2}u\right] &\text{if}\quad u<0.
		\end{cases}
		$$
		Hence,  \eqref{Pseudoexists} holds while \eqref{SufficientPC} is not satisfied. 
	}
\end{Example}

\section{Conclusions}
We have obtained several second-order necessary and sufficient conditions for the (strict) quasiconvexity and the (strict) pseudoconvexity of $\mathcal{C}^{1,1}$-smooth functions on finite-dimensional Euclidean spaces. Many examples are proposed to illustrate our results.
Further investigations are needed to generalize our results to wider classes of smooth and non-smooth functions on infinite-dimensional Banach spaces.

\end{document}